\documentclass[reqno]{amsart}
\usepackage{amssymb,amsmath,hyperref}
\usepackage{amsrefs}
\usepackage[foot]{amsaddr}
\usepackage{bbold,stackrel}
 



\newtheorem{thm}{Theorem}

\newtheorem{cor}[thm]{Corollary}
\newtheorem{defi}[thm]{Definition}
\newtheorem{rem}[thm]{Remark}
\newtheorem{nota}[thm]{Notation}

\newtheorem{ack}[thm]{Acknowledgement}

\newtheorem*{tempo*}{Template}

\newcommand\be{\begin{equation}}
\newcommand\ee{\end{equation}} 

\usepackage{amsmath,amsfonts} 

\usepackage[applemac]{inputenc}

\def\bdefi{\begin{defi}\rm}
\def\edefi{\end{defi}}
\def\bnota{\begin{nota}\rm}
\def\enota{\end{nota}}
\def\FIVE{\Pi_{1}^{1}\text{-\textup{\textsf{CA}}}_{0}}

\def\SIX{\Pi_{2}^{1}\text{-\textsf{\textup{CA}}}_{0}}

\def\SIXK{\Pi_{k}^{1}\text{-\textsf{\textup{CA}}}_{0}^{\omega}}

\def\ATR{\textup{\textsf{ATR}}}

\def\Z{\textup{\textsf{Z}}}

\def\NFP{\textup{\textsf{NFP}}}
\def\ZFC{\textup{\textsf{ZFC}}}

\def\ZF{\textup{\textsf{ZF}}}

 \def\r{\mathbb{r}}

\def\c{\textup{\textsf{c}}}
\def\RCA{\textup{\textsf{RCA}}}
\def\({\textup{(}}
\def\){\textup{)}}

\def\RCAo{\textup{\textsf{RCA}}_{0}^{\omega}}
\def\ACAo{\textup{\textsf{ACA}}_{0}^{\omega}}

\def\WKL{\textup{\textsf{WKL}}}

\def\bye{\end{document}}
\def\N{{\mathbb  N}}
\def\Q{{\mathbb  Q}}
\def\R{{\mathbb  R}}
\def\L{\textsf{\textup{L}}}

\def\ind{\textup{{ind }}}
\def\Ind{\textup{{Ind }}}

\def\di{\rightarrow}
\def\asa{\leftrightarrow}
\def\ACA{\textup{\textsf{ACA}}}
\def\PUNI{\textup{\textsf{PUNI}}}

\def\QFAC{\textup{\textsf{QF-AC}}}

\def\HBU{\textup{\textsf{HBU}}}
\def\PRA{\textup{\textsf{PRA}}}
\def\IND{\textup{\textsf{IND}}}
\def\HBT{\textup{\textsf{HBT}}}

\def\LIN{\textup{\textsf{LIN}}}
\def\LIL{\textup{\textsf{LIL}}}

\def\ECF{\textup{\textsf{ECF}}}

\def\SCF{\textup{\textsf{SCF}}}

\usepackage{graphicx}
\usepackage{tikz, float}
\usetikzlibrary{matrix, shapes.misc}

\setcounter{tocdepth}{3}
\numberwithin{equation}{section}
\numberwithin{thm}{section}

\usepackage{comment}

\begin{document}
\title[Reverse Mathematics of topology]{Reverse Mathematics of topology \\ {\tiny dimension, paracompactness, and splittings}}
\author{Sam Sanders}
\address{School of Mathematics, University of Leeds \& Department of Mathematics, TU Darmstadt, Germany}
\email{sasander@me.com}

\subjclass[2010]{03B30, 03D65, 03F35}
\keywords{reverse mathematics, topology, dimension, paracompactness}

\begin{abstract}
Reverse Mathematics (RM hereafter) is a program in the foundations of mathematics founded by Friedman and developed extensively by Simpson and others.  
The aim of RM is to find the minimal axioms needed to prove a theorem of ordinary, i.e.\ non-set-theoretic, mathematics.  As suggested by the title, 
this paper deals with the study of the topological notions of \emph{dimension} and \emph{paracompactness}, inside Kohlenbach's \emph{higher-order} RM. 
As to \emph{splittings}, there are some examples in RM of theorems $A, B, C$ such that $A\asa (B\wedge C)$, i.e.\ $A$ can be \emph{split} into two independent (fairly natural) parts $B$ and $C$, and the aforementioned topological notions give rise to a number of splittings involving \emph{highly natural} $A, B, C$.  
Nonetheless, the higher-order picture is markedly different from the second-one: in terms of comprehension axioms, the proof in higher-order RM of e.g.\ the paracompactness of the unit interval requires \emph{full second-order arithmetic}, while the second-order/countable version of paracompactness of the unit interval is provable in the base theory $\RCA_{0}$. 
We obtain similarly `exceptional' results for the \emph{Urysohn identity}, the \emph{Lindel\"of lemma}, and \emph{partitions of unity}.  We show that our results exhibit a certain \emph{robustness}, in that they do not depend on the exact definition of cover, even in the absence of the axiom of choice.    
\end{abstract}
%

\maketitle
\thispagestyle{empty}


\section{Introduction}\label{intro}
Reverse Mathematics (RM hereafter) is a program in the foundations of mathematics initiated around 1975 by Friedman (\cites{fried,fried2}) and developed extensively by Simpson (\cite{simpson2}) and others.  
We refer to \cite{stillebron} for a basic introduction to RM and to \cite{simpson2, simpson1} for an (updated) overview of RM.  We will assume basic familiarity with RM, the associated `Big Five' systems and the `RM zoo' (\cite{damirzoo}).  
We do introduce Kohlenbach's \emph{higher-order} RM in some detail Section \ref{HORM}.    

\smallskip

Topology studies those properties of space that are invariant under continuous deformations.
The modern subject was started by Poincar\'e's \emph{Analysis Situs} at the end of 19th century, 
and rapid breakthroughs were established by Brouwer in a two-year period starting 1910, as discussed in \cite{godsgeschenk}*{p.\ 168}. 
We generally base ourselves on the standard monograph by Munkres (\cite{munkies}).

\smallskip

Now, the RM of topology has been studied inside the framework of second-order arithmetic in e.g.\ \cite{mummymf, mummyphd, mummy}.
This approach makes heavy use of \emph{coding} to represent uncountable objects via countable approximations.  Hunter develops the higher-order RM of topology in \cite{hunterphd}, and points out some 
potential problems with the aforementioned coding practice.  Hunter's observations constitute our starting point and motivation: 
working in higher-order RM, we study the RM of notions like \emph{dimension} and \emph{paracompactness} motivated as follows: the former is among the most basic/fundamental notions of topology, while the latter has already been studied in second-order RM, e.g.\ in the context of metrisation theorems (\cite{simpson2, mummymf}). 

\smallskip

As it turns out, the picture we obtain in higher-order RM is \emph{completely different} from the well-known picture in second-order RM.  
For instance, in terms of comprehension axioms, the proof in higher-order RM of the paracompactness of the unit interval requires \emph{full second-order arithmetic} by Theorem \ref{paramaeremki}, 
while the second-order/countable version of paracompactness of the unit interval is provable in the base theory $\RCA_{0}$ of second-order RM by \cite{simpson2}*{II.7.2}. 
Furthermore, the \emph{Urysohn identity} connects various notions of dimension, and a proof of this identity {for $[0,1]$} similarly requires (comprehension axioms as strong as) full second-order arithmetic.  
We also study the \emph{Lindel\"of lemma} and \emph{partitions of unity}.

\smallskip

The aforementioned major difference between second-order and higher-order RM begs the question as to how robust the results in this paper are.  
For instance, do our theorems depend on the exact definition of cover?  What happens if we adopt a more general definition?  We show in Sections \ref{introke}, \ref{kerkend}, and \ref{BBB}
that our results indeed boast a lot of robustness, and in particular that they do not depend on the definition of cover, even in the absence of the axiom of (countable) choice. 
The latter feature is important in view of the topological `disasters' (see e.g.\ \cite{kermend}) that manifest themselves in the absence of the axiom of (countable) choice.  
A rather elegant base theory is formulated in Section \ref{BBB}, based on the \emph{neighbourhood function principle} from \cite{troeleke1}.

\smallskip

We also obtain a number of highly natural \emph{splittings}, where the latter is defined as follows.  
As discussed in e.g.\ \cite{dsliceke}*{\S6.4}, there are (some) theorems $A, B, C$ in the RM zoo such that $A\asa (B\wedge C)$, i.e.\ $A$ can be \emph{split} into two independent (fairly natural) parts $B$ and $C$ (over $\RCA_{0}$).  
It is fair to say that there are only few \emph{natural} examples of splittings in second-order RM, though such claims are invariably subjective in nature. 
A large number of splittings in \emph{higher-order} RM may be found in \cite{samsplit}.  
%
%
%

\smallskip

Finally, like in \cite{dagsamIII, dagsamV}, statements of the form `a proof of this theorem requires full second-order arithmetic' should be interpreted in reference to the usual scale of comprehension axioms that is part of the \emph{G\"odel hierarchy} (see Section \ref{kurtzenhier} for the latter).  
The previous statement thus (merely) expresses that there is no proof of this theorem using comprehension axioms restricted to a sub-class, like e.g.\ $\Pi_{k}^{1}$-formulas (with only first and second-order parameters).  An intuitive visual clarification may be found in 
Figure \ref{xxy}, where the statement \emph{the unit interval is paracompact} is shown to be independent of the medium range of the G\"odel hierarchy.  Similarly, when we say `provable without the axiom of choice', we ignore the use of the very weak instances of the latter included in the base theory of higher-order RM.

\smallskip

In conclusion, it goes without saying that our results highlight a \emph{major} difference between second- and higher-order arithmetic, and the associated development of RM.  We leave it the reader to draw conclusions from this observation.

\section{Preliminaries}\label{preli}
\subsection{Higher-order Reverse Mathematics}\label{HORM}
We sketch Kohlenbach's \emph{higher-order Reverse Mathematics} as introduced in \cite{kohlenbach2}.  In contrast to `classical' RM, higher-order RM makes use of the much richer language of \emph{higher-order arithmetic}.  

\smallskip

As suggested by its name, {higher-order arithmetic} extends second-order arithmetic $\Z_{2}$.  Indeed, while the latter is restricted to numbers and sets of numbers, higher-order arithmetic also has sets of sets of numbers, sets of sets of sets of numbers, et cetera.  
To formalise this idea, we introduce the collection of \emph{all finite types} $\mathbf{T}$, defined by the two clauses:
\begin{center}
(i) $0\in \mathbf{T}$   and   (ii)  If $\sigma, \tau\in \mathbf{T}$ then $( \sigma \di \tau) \in \mathbf{T}$,
\end{center}
where $0$ is the type of natural numbers, and $\sigma\di \tau$ is the type of mappings from objects of type $\sigma$ to objects of type $\tau$.
In this way, $1\equiv 0\di 0$ is the type of functions from numbers to numbers, and where  $n+1\equiv n\di 0$.  We also write $\mathbb{1}$ for the type $1\di 1$.   Viewing sets as given by characteristic functions, we note that $\Z_{2}$ only includes objects of type $0$ and $1$, i.e.\ natural numbers and sets thereof.    

\smallskip

The language $\L_{\omega}$ includes variables $x^{\rho}, y^{\rho}, z^{\rho},\dots$ of any finite type $\rho\in \mathbf{T}$.  Types may be omitted when they can be inferred from context.  
The constants of $\L_{\omega}$ includes the type $0$ objects $0, 1$ and $ <_{0}, +_{0}, \times_{0},=_{0}$  which are intended to have their usual meaning as operations on $\N$.
Equality at higher types is defined in terms of `$=_{0}$' as follows: for any objects $x^{\tau}, y^{\tau}$, we have
\be\label{aparth}
[x=_{\tau}y] \equiv (\forall z_{1}^{\tau_{1}}\dots z_{k}^{\tau_{k}})[xz_{1}\dots z_{k}=_{0}yz_{1}\dots z_{k}],
\ee
if the type $\tau$ is composed as $\tau\equiv(\tau_{1}\di \dots\di \tau_{k}\di 0)$.  
Furthermore, $\L_{\omega}$ also includes the \emph{recursor constant} $\mathbf{R}_{\sigma}$ for any $\sigma\in \mathbf{T}$, which allows for iteration on type $\sigma$-objects as in the special case \eqref{special}.  
Formulas and terms are defined as usual.  
\bdefi The base theory $\RCAo$ consists of the following axioms:
\begin{enumerate}
\item  Basic axioms expressing that $0, 1, <_{0}, +_{0}, \times_{0}$ form an ordered semi-ring with equality $=_{0}$.
\item Basic axioms defining the well-known $\Pi$ and $\Sigma$ combinators (aka $K$ and $S$ in \cite{avi2}), which allow for the definition of \emph{$\lambda$-abstraction}. 
\item The defining axiom of the recursor constant $\mathbf{R}_{0}$: For $m^{0}$ and $f^{1}$: 
\be\label{special}
\mathbf{R}_{0}(f, m, 0):= m \textup{ and } \mathbf{R}_{0}(f, m, n+1):= f( n,\mathbf{R}_{0}(f, m, n)).
\ee
\item The \emph{axiom of extensionality}: for all $\rho, \tau\in \mathbf{T}$, we have:
\be\label{EXT}\tag{$\textsf{\textup{E}}_{\rho, \tau}$}  
(\forall  x^{\rho},y^{\rho}, \varphi^{\rho\di \tau}) \big[x=_{\rho} y \di \varphi(x)=_{\tau}\varphi(y)   \big].
\ee 
\item The induction axiom for quantifier-free\footnote{To be absolutely clear, variables (of any finite type) are allowed in quantifier-free formulas of the language $\L_{\omega}$: only quantifiers are banned.} formulas of $\L_{\omega}$.
\item $\QFAC^{1,0}$: The quantifier-free axiom of choice as in Definition \ref{QFAC}.
\end{enumerate}
\edefi
\bdefi\label{QFAC} The axiom $\QFAC$ consists of the following for all $\sigma, \tau \in \textbf{T}$:
\be\tag{$\QFAC^{\sigma,\tau}$}
(\forall x^{\sigma})(\exists y^{\tau})A(x, y)\di (\exists Y^{\sigma\di \tau})(\forall x^{\sigma})A(x, Y(x)),
\ee
for any quantifier-free formula $A$ in the language of $\L_{\omega}$.
\edefi
As discussed in \cite{kohlenbach2}*{\S2}, $\RCAo$ and $\RCA_{0}$ prove the same sentences `up to language' as the latter is set-based and the former function-based.  Recursion as in \eqref{special} is called \emph{primitive recursion}; the class of functionals obtained from $\mathbf{R}_{\rho}$ for all $\rho \in \mathbf{T}$ is called \emph{G\"odel's system $T$} of all (higher-order) primitive recursive functionals.  

\smallskip

We use the usual notations for natural, rational, and real numbers, and the associated functions, as introduced in \cite{kohlenbach2}*{p.\ 288-289}.  
\begin{defi}[Real numbers and related notions in $\RCAo$]\label{keepintireal}\rm~
\begin{enumerate}
\item Natural numbers correspond to type zero objects, and we use `$n^{0}$' and `$n\in \N$' interchangeably.  Rational numbers are defined as signed quotients of natural numbers, and `$q\in \Q$' and `$<_{\Q}$' have their usual meaning.    
\item Real numbers are coded by fast-converging Cauchy sequences $q_{(\cdot)}:\N\di \Q$, i.e.\  such that $(\forall n^{0}, i^{0})(|q_{n}-q_{n+i})|<_{\Q} \frac{1}{2^{n}})$.  
We use Kohlenbach's `hat function' from \cite{kohlenbach2}*{p.\ 289} to guarantee that every $q^{1}$ defines a real number.  
\item We write `$x\in \R$' to express that $x^{1}:=(q^{1}_{(\cdot)})$ represents a real as in the previous item and write $[x](k):=q_{k}$ for the $k$-th approximation of $x$.    
\item Two reals $x, y$ represented by $q_{(\cdot)}$ and $r_{(\cdot)}$ are \emph{equal}, denoted $x=_{\R}y$, if $(\forall n^{0})(|q_{n}-r_{n}|\leq {2^{-n+1}})$. Inequality `$<_{\R}$' is defined similarly.  
We sometimes omit the subscript `$\R$' if it is clear from context.           
\item Functions $F:\R\di \R$ are represented by $\Phi^{1\di 1}$ mapping equal reals to equal reals, i.e. $(\forall x , y\in \R)(x=_{\R}y\di \Phi(x)=_{\R}\Phi(y))$.
\item The relation `$x\leq_{\tau}y$' is defined as in \eqref{aparth} but with `$\leq_{0}$' instead of `$=_{0}$'.  Binary sequences are denoted `$f^{1}, g^{1}\leq_{1}1$', but also `$f,g\in C$' or `$f, g\in 2^{\N}$'.  
\end{enumerate}
\end{defi}
Finally, we mention the $\ECF$-interpretation, of which the technical definition may be found in \cite{troelstra1}*{p.\ 138, 2.6}.
Intuitively speaking, the $\ECF$-interpretation $[A]_{\ECF}$ of a formula $A\in \L_{\omega}$ is just $A$ with all variables 
of type two and higher replaced by countable representations of continuous functionals. 
The $\ECF$-interpretation connects $\RCAo$ and $\RCA_{0}$ (see \cite{kohlenbach2}*{Prop.\ 3.1}) in that if $\RCAo$ proves $A$, then $\RCA_{0}$ proves $[A]_{\ECF}$, again `up to language', as $\RCA_{0}$ is 
formulated using sets, and $[A]_{\ECF}$ is formulated using types, namely only using type zero and one objects.  

\subsection{Some axioms of higher-order arithmetic}\label{saxioms}
We introduce some functionals from \cite{kohlenbach2,dagsamIII, dagsamV} which give rise to the higher-order counterparts of $\Z_{2}$, and some of the Big Five systems.  
In each case, the higher-order system is a conservative extension of the second-order system, for a fairly broad formula class. 

\smallskip
\noindent
First of all, $\ACA_{0}$ is readily derived from the following `Turing jump' functional:
\be\label{muk}\tag{$\exists^{2}$}
(\exists \varphi^{2}\leq_{2}1)(\forall f^{1})\big[(\exists n)(f(n)=0) \asa \varphi(f)=0    \big]. 
\ee
and $\ACA_{0}^{\omega}\equiv\RCAo+(\exists^{2})$ proves the same $\Pi_{2}^{1}$-sentences as $\ACA_{0}$ by \cite{yamayamaharehare}*{Theorem~2.2}. 
This functional is \emph{discontinuous} at $f=_{1}11\dots$, and $(\exists^{2})$ is equivalent to the existence of $F:\R\di\R$ such that $F(x)=1$ if $x>_{\R}0$, and $0$ otherwise (\cite{kohlenbach2}*{\S3}).  

\smallskip
\noindent
Secondly, $\FIVE$ is readily derived from the following `Suslin functional':
\be\tag{$S^{2}$}
(\exists S^{2}\leq_{2}1)(\forall f^{1})\big[  (\exists g^{1})(\forall x^{0})(f(\overline{g}n)=0)\asa S(f)=0  \big], 
\ee
and $\FIVE^{\omega}\equiv \RCAo+(S^{2})$ proves the same $\Pi_{3}^{1}$-sentences as $\FIVE$ by \cite{yamayamaharehare}*{Theorem 2.2}.   
By definition, the Suslin functional $S^{2}$ can decide whether a $\Sigma_{1}^{1}$-formula (as in the left-hand side of $(S^{2})$) is true or false.   
Note that we allow formulas with (type one) \emph{function} parameters, but \textbf{not} with (higher type) \emph{functional} parameters.
Similarly, let $(S_{k}^{2})$ state the existence of a functional $S_{k}^{2}$ that decides $\Pi_{k}^{1}$-formulas (only involving type zero and one parameters).  
We define $\SIXK\equiv \RCAo+(S_{k}^{2})$.

\smallskip
\noindent
Thirdly, full second-order arithmetic $\Z_{2}$ is readily derived from $\cup_{k}\SIXK$, or from:
\be\tag{$\exists^{3}$}
(\exists E^{3}\leq_{3}1)(\forall Y^{2})\big[  (\exists f^{1})Y(f)=0\asa E(Y)=0  \big], 
\ee
and we define $\Z_{2}^{\Omega}\equiv \RCAo+(\exists^{3})$ and $\Z_{2}^{\omega}\equiv \cup_{k}\SIXK$; both are conservative over $\Z_{2}$ by \cite{hunterphd}*{Cor.~2.6}, but see Remark \ref{knellen}.   
The (unique) functional from $(\exists^{3})$ is also called `$\exists^{3}$', and we will use a similar convention for other functionals.  

\smallskip
\noindent
Fourth, the \emph{comprehension for Cantor space} functional, introduced in \cite{dagsamV}, yields a conservative extension of $\WKL_{0}$ by \cite{kohlenbach2}*{Prop.\ 3.15}:
\be\tag{$\kappa_{0}^{3}$}
(\exists \kappa_{0}^{3}\leq_{3}1)(\forall Y^{2})\big[ \kappa_{0}(Y)=0\asa (\exists f\in C)(Y(f)>0)  \big].
\ee
Kohlenbach has shown $[(\exists^{2})+(\kappa_{0}^{3})]\asa (\exists^{3})$ over $\RCAo$ by \cite{dagsam}*{Rem.\ 6.13}.  

\smallskip

\noindent
Fifth, recall that the Heine-Borel theorem (aka \emph{Cousin's lemma}; see \cite{cousin1}*{p.\ 22}) states the existence of a finite sub-cover for an open cover of a compact space. 
Now, a functional $\Psi:\R\di \R^{+}$ gives rise to the \emph{canonical} cover $\cup_{x\in I} I_{x}^{\Psi}$ for $I\equiv [0,1]$, where $I_{x}^{\Psi}$ is the open interval $(x-\Psi(x), x+\Psi(x))$.  
Hence, the uncountable cover $\cup_{x\in I} I_{x}^{\Psi}$ has a finite sub-cover by the Heine-Borel theorem; in symbols:
\be\tag{$\HBU$}
(\forall \Psi:\R\di \R^{+})(\exists  y_{1}, \dots, y_{k}\in I){(\forall x\in I)}(\exists i\leq k)(x\in I_{y_{i}}^{\Psi}).
\ee
There is also the highly similar \emph{Lindel\"of lemma} stating the existence of a \emph{countable} sub-cover of possibly non-compact spaces.  We restrict ourselves to $\R$ as follows. 
\be\tag{$\LIN$}
(\forall \Psi:\R\di \R^{+})(\exists \Phi^{0\di 1})(\forall x\in \R)(\exists n^{0})(x\in I^{\Psi}_{\Phi(n)}),
\ee
By the results in \cite{dagsamIII, dagsamV}, $\Z_{2}^{\Omega}$ proves $\HBU$, but $\SIXK$ cannot (for $k\geq 1$). The same holds for $\LIN$, if we add $\QFAC^{0,1}$, while the latter implies $\HBU\asa [\WKL+\LIN]$.
The importance/naturalness of $\HBU$ and $\LIN$ is discussed in Section \ref{introke}. 

\smallskip

Finally, since Cantor space (denoted $C$ or $2^{\N}$) is homeomorphic to a closed subset of $[0,1]$, the former inherits the same property.  
In particular, for any $G^{2}$, the corresponding `canonical cover' of $2^{\N}$ is $\cup_{f\in 2^{\N}}[\overline{f}G(f)]$ where $[\sigma^{0^{*}}]$ is the set of all binary extensions of $\sigma$.  By compactness, there is a finite sequence $\langle f_0 , \ldots , f_n\rangle$ such that the set of $\cup_{i\leq n}[\bar f_{i} F(f_i)]$ still covers $2^{\N}$.  By \cite{dagsamIII}*{Theorem 3.3}, $\HBU$ is equivalent to the same compactness property for $C$, as follows:
\be\tag{$\HBU_{\c}$}
(\forall G^{2})(\exists  f_{1}, \dots, f_{k} \in C ){(\forall f\in C)}(\exists i\leq k)(f\in [\overline{f_{i}}G(f_{i})]).
\ee
We now introduce the specification $\SCF(\Theta)$ for a functional $\Theta^{2\di 1^{*}}$ which computes such a finite sub-cover.  
We refer to such a functional $\Theta$ as a \emph{realiser} for the compactness of Cantor space, and simplify its type to `$3$' to improve readability.
\be\label{normaal}\tag{$\SCF(\Theta)$}
(\forall G^{2})(\forall f \in C)(\exists g\in \Theta(G))(f\in [\overline{g}G(g)]).
\ee
Clearly, there is no unique $\Theta$ as in \ref{normaal} (just add more binary sequences to $\Theta(G)$); nonetheless, 
we have in the past referred to any $\Theta$ satisfying $\SCF(\Theta)$ as `the' \emph{special fan functional} $\Theta$, and we will continue this abuse of language.  


\section{Reverse Mathematics of Topology}
We study the RM of theorems of topology pertaining to the following notions: (topological) dimension and the Urysohn identity (Section \ref{diemensie}) and paracompactness (Section \ref{diemensie2}). 
We introduce a suitable notion of cover (Section \ref{introke}) and show (Section \ref{kerkend}) that our aforementioned results are independent of the definition of cover, without making use of the axiom of choice. 
We discuss similar results for the Lindel\"of lemma and partitions of unity (Section \ref{kerkend}).  
We formulate a most elegant base theory in Section \ref{BBB} based on the \emph{neighbourhood function principle} from \cite{troeleke1}.    
\subsection{Introduction: topology in higher-order arithmetic}\label{introke}
We discuss how higher-order arithmetic can accommodate the central topological notion of cover.  
In particular, we introduce a generalisation of the notion of cover used in \cite{dagsamIII, dagsamV} and shall show in Section \ref{kerkend} that the new notion yields covering lemmas
equivalent to the original, \emph{without} a need for the axiom of countable choice. 

\smallskip

First of all, early covering lemmas, like the \emph{Cousin and Lindel\"of lemmas}, did not make use of the (general) notion of cover. 
Indeed, Cousin and Lindel\"of talk about (uncountable) covers of $\R^{2}$ and $\R^{n}$ as follows (resp.\ in 1895 and 1903): 
\begin{quote}
we suppose that to each point of $S$ corresponds a circle of non-zero finite radius and with this point as centre (\cite{cousin1}*{p.\ 22} )
\end{quote}
\begin{quote}
for every point $\textsf{\textup{P}}$, let us construct a sphere $\textsf{\textup{S}}_{\textsf{\textup{P}}}$ with $\textsf{P}$ as the centre\\ and a variable radius $\rho_{\textsf{\textup{P}}}$ (\cite{blindeloef}*{p.\ 698}))
\end{quote}
To stay close to the original formulation by Cousin and Lindel\"of, we introduced in \cite{dagsamIII, dagsamV} the notion of `canonical' open covers $\cup_{x\in I}I_{x}^{\Psi}$ of $I\equiv [0,1]$ generated by $\Psi:I\di \R^{+}$ and where $I_{x}^{\Psi}\equiv (x-\Psi(x), x+\Psi(x))$.
Unfortunately, such covers always involve points that are covered by arbitrarily many intervals; this property makes such covers unsuitable for e.g.\ the study of topological dimension, in which the (minimal) number of intervals covering a point is central.  

\smallskip

Secondly, the previous observation shows that we have to generalise our notion of canonical cover, and we shall do this by considering $\psi:I\di \R$. i.e.\ we allow empty $I_{x}^{\psi}$. 
In this way, we say that `$\cup_{x\in I}I_{x}^{\psi}$ covers $[0,1]$' if $(\forall x\in I)(\exists y\in I)(x\in I_{y}^{\psi})$.  
This notion of cover gives rise to the following version of the Heine-Borel theorem. 
\be\tag{$\HBT$}
(\forall \psi:I\di \R)\big[ I\subset \cup_{x\in I}I_{x}^{\psi}\di   (\exists y_{1}, \dots, y_{k} \in I)(I\subset \cup_{i\leq k}I_{y_{i}}^{\psi}) \big].
\ee
We establish in Section \ref{kerkend} that our `new' notion of cover is quite robust by showing that (i) $\HBU\asa \HBT$ over $\RCAo+\QFAC^{1,1}$, i.e.\ the new notion of cover is not a real departure from the old one, and (ii) the previous equivalence can also be proved without the axiom of choice.  
Item (ii) should be viewed in the light of the topological `disasters' (see e.g.\ \cite{kermend}) that apparently happen in the absence of the axiom of (countable) choice.  We also show that any notion of cover definable in $\Z_{2}^{\Omega}$ inherits the aforementioned `nice' properties. 
Thus, we may conclude that our results boast a lot of robustness, and in particular that they do not depend on the definition of cover, even in the absence of the axiom of (countable) choice. 
 
\smallskip

Finally, we discuss the mathematical naturalness of $\HBU$ and $(\exists^{2})$.
\begin{rem}\rm
Dirichlet already discusses the  characteristic function of the rationals, which is essentially $\exists^{2}$, around 1829 in \cite{didi1}, while Riemann defines a function with countably many discontinuities via a series in his \emph{Habilitationsschrift} (\cite{kleine}*{p.~115}).  
Furthermore, the \emph{Cousin lemma} from \cite{cousin1}*{p.\ 22}, which is essentially $\HBU$, dates back\footnote{The collected works of Pincherle contain a footnote by the editors (see \cite{tepelpinch}*{p.\ 67}) which states that the associated \emph{Teorema} (published in 1882) corresponds to the Heine-Borel theorem.  Moreover, Weierstrass proves the Heine-Borel theorem (without explicitly formulating it) in 1880 in \cite{amaimennewekker}*{p.\ 204}.   A detailed motivation for these claims may be found in \cite{medvet}*{p. 96-97}.} about 135 years.  
As shown in \cite{dagsamIII}, $(\exists^{2})$ and $\HBU$ are essential for the development of the \emph{gauge integral} (\cite{bartle1337}).  This integral was introduced by Denjoy (\cite{ohjoy}), in a different and more complicated form, around the same time as the Lebesgue integral; the reformulation of Denjoy's integral by Henstock and Kurzweil in Riemann-esque terms (see \cite{bartle1337}*{p.\ 15}), provides a \emph{direct} and elegant formalisation of the \emph{Feynman path integral} (\cites{burkdegardener,mullingitover,secondmulling}) and financial mathematics (\cites{mulkerror, secondmulling}).    
\end{rem}

\subsection{The notion of dimension}\label{diemensie}
The notion of dimension of basic spaces like $[0,1]$ or $\R^{n}$ is intuitively clear to most mathematicians, but finding a formal definition of dimension \emph{that does not depend on the topology} is a non-trivial problem.  

\smallskip

We introduce three notions of dimension: the  `topological' dimension $\dim X$ and the `small' and `large' inductive dimensions $\textup{ind } X$ and $\textup{Ind } X$.  We study the RM properties of the \emph{Uryoshn identity} (\cite{enc2}*{p.\ 272}) which expresses that these dimension are equal for a large class of spaces, including separable metric spaces.

\smallskip

First of all, the \emph{covering dimension}, later generalised to the \emph{topological dimension}, goes back to Lebesgue.  
Indeed, Munkres writes the following:
\begin{quote}
We shall define, for an arbitrary topological space $X$, a notion of topological dimension. It is the ``covering dimension" originally defined by Lebesgue.  (\cite{munkies}*{p.\ 305})
\end{quote}
The following definition of topological dimension may be found in Munkres' seminal monograph \cite{munkies}*{p.\ 161}, and in \cite{enc2}*{p.\ 274}, \cite{engeltjemijn}*{Ex.\ 1.7.E and Prop.\ 3.2.2}.  
\bdefi[Order]
A collection $\mathcal{A}$ of subsets of the space $X$ is said to have order $m + 1$, 
if some point of $A$ lies in $m +1$ elements of $\mathcal{A}$, and no point of $X$ lies in more than $m +1$
elements of $A$.
\edefi
\bdefi[Refinement]
Given a collection $\mathcal{A}$ of subsets of $X$, a collection $\mathcal{B}$ is said to refine $\mathcal{A}$, or to be
a refinement of $\mathcal{A}$ if for each element $B \in \mathcal{B}$ there is an element $A\in \mathcal{A}$ such that $A\subset B$.
\edefi
\bdefi[Topological dimension]
A space $X$ is said to be \textbf{finite-dimensional} if there is $m\in\N$ such
that for every open covering $\mathcal{A}$ of $X$, there is an open covering $\mathcal{B}$ of $X$ that refines $\mathcal{A}$
and has order at most $m + 1$. The topological dimension of $X$ is the
smallest value of $m$ for which this statement holds; we denote it by $\dim X$.
\edefi
In the context of $\RCAo$, we say that `$\phi:I\di \R$ is a \emph{refinement} of $\psi:I\di \R$' if $(\forall x\in I)(\exists y\in I)(I_{x}^{\phi}\subseteq I_{y}^{\psi})$.  
With this definition in place, statements like `the topological dimension of $[0,1]$ is at most $1$', denoted `$\dim([0,1])\leq1$', makes perfect sense in $\RCAo$.  
Such a statement turns out to be quite hard to prove, as full second-order arithmetic is needed to prove $\HBT$ by Theorem \ref{ziedenauw}.  
\begin{thm}\label{rathergen}
The system $\ACAo+\QFAC^{1,1}+[\dim([0,1])\leq1]$ proves $\HBT$. 
\end{thm}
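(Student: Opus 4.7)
My plan is to use $\dim([0,1]) \leq 1$ to reduce $\HBT$ to the special case of a cover of order at most $2$, for which I can iterate a ``next right endpoint'' construction and argue termination by exploiting the order constraint. Given $\psi : I \di \R$ with $I \subset \cup_{x \in I} I_{x}^{\psi}$, the hypothesis produces $\phi : I \di \R$ covering $I$, satisfying $(\forall x \in I)(\exists y \in I)(I_{x}^{\phi} \subset I_{y}^{\psi})$, and with no point of $I$ lying in more than two intervals $I_{y}^{\phi}$. Since $(\exists^{2})$ from $\ACAo$ turns the arithmetical predicate ``$x \in I_{y}^{\phi}$'' into an effectively quantifier-free one, $\QFAC^{1,1}$ applied to the cover condition for $\phi$ delivers $F : I \di I$ with $x \in I_{F(x)}^{\phi}$ for all $x \in I$.

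I then define $c_{0} := 0$ and $c_{n+1} := F(c_{n}) + \phi(F(c_{n}))$ by primitive recursion, write $y_{n} := F(c_{n})$, and note that $c_{n+1}$ is the right endpoint of $I_{y_{n}}^{\phi}$ while $c_{n} < c_{n+1}$ because $c_{n}\in I_{y_{n}}^{\phi}$. A straightforward induction then gives $[0, c_{n}) \subset \bigcup_{i < n} I_{y_{i}}^{\phi}$. The crucial claim is that $c_{N} > 1$ for some $N \in \N$; once this is established, $[0,1] \subset \bigcup_{i < N} I_{y_{i}}^{\phi}$ is a finite subcover of $\phi$, and finitely many applications of the refinement condition (only finitely many witnesses needed, no further choice required) produce the desired finite subcover of $\psi$.

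The main obstacle is the termination claim, where the order-$2$ hypothesis does the essential work. Suppose for contradiction $(c_{n})$ is infinite; by arithmetic comprehension the strictly increasing bounded sequence converges to some $a \in (0, 1]$. Let $v := F(a)$, so $a \in I_{v}^{\phi}$, and by openness $c_{n} \in I_{v}^{\phi}$ for all large $n$. If $y_{n} = v$ infinitely often, then $c_{n+1} = v + \phi(v) > a$ along a subsequence, contradicting $c_{n+1} \to a$. Otherwise $y_{n} \neq v$ eventually, and order $2$ forces $c_{n}$ to lie in exactly the two intervals $I_{v}^{\phi}, I_{y_{n}}^{\phi}$; since $c_{n+1}$ is the right endpoint of the open interval $I_{y_{n}}^{\phi}$ one has $c_{n+1} \notin I_{y_{n}}^{\phi}$, so $y_{n+1} \neq y_{n}$, making $v, y_{n}, y_{n+1}$ three distinct parameters. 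The decisive observation is that any point in a sufficiently small left-neighbourhood of $c_{n+1}$ then lies simultaneously in $I_{y_{n}}^{\phi}$ (since $c_{n+1}$ is its right endpoint and $\phi(y_{n}) > 0$), in $I_{y_{n+1}}^{\phi}$ (open around $c_{n+1}$), and in $I_{v}^{\phi}$ (by proximity to $a$, for $n$ large), giving three distinct intervals covering one point and contradicting order at most $2$. The subtlest part of the write-up will be verifying this three-way coincidence rigorously inside $\ACAo + \QFAC^{1,1}$, carefully tracking degenerate cases where a priori some of $v, y_{n}, y_{n+1}$ might coincide or where $I_{v}^{\phi}$ is so large that the sequence terminates immediately.
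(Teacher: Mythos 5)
Your proposal is correct and follows essentially the same route as the paper's proof: iterate the ``right endpoint of the covering interval'' map obtained from $\QFAC^{1,1}$, and show the resulting increasing sequence must exceed $1$ because otherwise its limit point is covered by an interval that, together with two consecutive intervals of the iteration near that limit, yields a point of order $3$, contradicting $\dim([0,1])\leq 1$. Your write-up is in fact somewhat more explicit than the paper's about the distinctness of the three intervals and about avoiding choice in the final passage from the refinement back to $\psi$, but the underlying argument is the same.
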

\begin{proof}
Let $\psi:I\di \R$ be such that $\cup_{x\in I}I_{x}^{\psi}$ covers $[0, 1]$, and let $\phi:I\di \R$ be the associated refinement of order at most $1$.  
Since the innermost formula is $\Sigma_{1}^{0}$ (with parameters), we may apply $\QFAC^{1,1}$ to $(\forall x\in I)(\exists y\in I)(x\in I_{y}^{\phi})$ to obtain $\Xi^{1\di 1}$ such that $\Xi(x)$ provides such $y$.
Define $\zeta^{0\di 1}$ as follows: $\zeta(0):=\Xi(0)+ \phi(\Xi(0))$ and $\zeta(n+1):= \Xi(\zeta(n))+\phi(\Xi(\zeta(n)))$.   Now consider the following formula:
\be\label{tuigs}
(\exists x\in I )(\forall n\in \N)(\zeta(n)<_{\R}x).
\ee
If \eqref{tuigs} is false, take $x=1$ and note that if $\zeta(n_{0})\geq_{\R}1$, the finite sequence $I_{\Xi(0)}^{\phi}, I_{\Xi(\zeta(0))}^{\phi}, I_{\Xi(\zeta(1))}^{\phi}, \dots, I_{\Xi(\zeta(n_{0}+1))}^{\phi}$ yield a finite sub-cover of $\cup_{x\in I}I_{x}^{\phi}$.  In this case, we apply $\QFAC^{1,1}$ (using also $(\exists^{2})$) to $(\forall x\in I)(\exists y\in I)(I_{x}^{\phi}\subseteq I_{y}^{\psi})$ 
to go from a finite sub-cover of $\cup_{x\in I}I_{x}^{\phi}$ to a finite sub-cover of $\cup_{x\in I}I_{x}^{\psi}$, and $\HBT$ follows.  

\medskip

If \eqref{tuigs} is true, let $x_{0}\in I$ be the least $x\in I$ such that $\varphi(x)\equiv(\forall n\in \N)(\zeta(n)<_{\R}x)$.  Since $\varphi(x)$ is $\Pi_{1}^{0}$, we can use $\exists^{2}$ and the usual interval-halving technique to find $x_{0}$; alternatively, use the monotone convergence theorem (\cite{simpson2}*{III.2.2}), provable in $\ACA_{0}$.  
However, $I_{\Xi(x_{0})}^{\phi}$ covers $x_{0}$, and thus for $n_{1}$ large enough, $\zeta(n)$ for $n\geq n_{1}$ will all be in the former interval, by the leastness of $x_{0}$.  But then there are points of order $3$ in the (by definition non-empty) intersection of $I_{\Xi(\zeta(n_{1}))}^{\phi}$ and $I_{\Xi(\zeta(n_{1}+1))}^{\phi}$, as this intersection is also inside $I_{\Xi(x_{0})}^{\phi}$.  This observation contradicts the assumption $\dim([0,1])\leq1$, and hence \eqref{tuigs} must be false, and we are done.  
\end{proof}
The previous theorem has a number of corollaries.  First of all, we obtain an equivalence over a weak base theory; we believe the components of the left-hand side to be independent\footnote{Firstly,  $\Z_{2}^{\Omega}+\QFAC^{0,1}$ does not prove $\HBU$ (\cite{dagsamIII, dagsamV}).  Secondly, $\dim(I)= 1$ seems consistent with recursive mathematics by \cite{beeson1}*{Theorem 6.1, p.\ 69}, i.e.\ the former cannot imply $\WKL$.}, i.e.\ that a proper `splitting' of $\HBT$ is achieved.

\begin{cor}\label{dorkeeee}
$\RCAo+\QFAC^{1,1}$ proves that $\big(\WKL+ [\dim(I)=1]\big)\asa \HBT$.
\end{cor}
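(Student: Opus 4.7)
The plan is to prove the equivalence in two directions: the forward implication adapts the proof of Theorem~\ref{rathergen} so that $(\exists^{2})$, and hence $\ACAo$, is replaced by $\WKL$ in conjunction with the additional structure afforded by $\dim(I)\leq 1$; the backward implication consists of three relatively routine sub-implications.

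For the forward direction $\WKL + [\dim(I)=1] \rightarrow \HBT$, I would follow the proof of Theorem~\ref{rathergen} essentially verbatim. Given $\psi:I\di \R$ with $\cup_{x\in I} I_{x}^{\psi}$ covering $I$, I use $\dim(I)\leq 1$ to obtain a refinement $\phi:I\di\R$ of order $\leq 2$, and then $\QFAC^{1,1}$ to obtain $\Xi^{1\di 1}$ with $x\in I_{\Xi(x)}^{\phi}$; the monotone chain $\zeta$ is then defined exactly as before. The key observation is that the countable sub-family $\{I_{\Xi(\zeta(n))}^{\phi}\}_{n\in \N}$ now lies in the regime of $\WKL$: if it covers $I$, the standard compactness formulation of $\WKL$ yields a finite sub-cover of $\cup_{x\in I} I_{x}^{\phi}$, and $\QFAC^{1,1}$ transfers this through the refinement to a finite sub-cover of $\cup_{x\in I} I_{x}^{\psi}$. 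Otherwise $\zeta$ is bounded above by some $x\in I$ with $x<_{\R}1$, and the same order-$3$ argument as in Theorem~\ref{rathergen} produces a contradiction, once an approximate supremum of $\zeta$ has been located inside $I_{\Xi(x)}^{\phi}$ via the chain's structure.

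For the backward direction $\HBT\rightarrow \WKL + [\dim(I)=1]$, I would verify three items separately. The implication $\HBT\rightarrow \WKL$ follows by realising any countable family of open intervals covering $I$ as $\cup_{x\in I} I_{x}^{\psi}$ for a suitable $\psi$ (using density of rationals), and then extracting a finite sub-cover via $\HBT$; the resulting compactness principle is well known to be equivalent to $\WKL$ over $\RCA_{0}$. For $\HBT\rightarrow \dim(I)\leq 1$, one takes any $\psi$ with $\cup_{x\in I} I_{x}^{\psi}=I$, applies $\HBT$ for a finite sub-cover $I_{y_{1}}^{\psi},\dots,I_{y_{k}}^{\psi}$, sorts the $y_{i}$ by midpoint, and trims the radii so that only consecutive intervals overlap; this produces a refinement of order $\leq 2$, and the associated $\phi$ is defined to match the trimmed radii at the $y_{i}$ and to vanish elsewhere. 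For $\HBT\rightarrow \dim(I)\geq 1$, one notes that any order-$\leq 1$ refinement consists of pairwise disjoint open intervals, whose union cannot cover the connected interval $[0,1]$, a basic fact provable in $\RCAo$ using only the intermediate value property for rational step functions.

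The main obstacle will be in the forward direction: the original proof of Theorem~\ref{rathergen} used monotone convergence (equivalent to $\ACAo$) to locate the least upper bound $x_{0}$ of $\zeta$ in the bounded case, but here only $\WKL$ is available. The saving idea is that the order-$\leq 2$ condition makes the chain $\{I_{\Xi(\zeta(n))}^{\phi}\}_{n\in \N}$ almost tile $I$, so that $\WKL$-compactness applied to this countable chain is strong enough to force either a finite exhaustion of $I$, or an order-$3$ collision with the interval witnessing an approximate supremum of $\zeta$. Making this approximate supremum argument genuinely go through without smuggling in $\ACAo$, and ensuring that $\QFAC^{1,1}$ is applied only to formulas which, in the present base theory, are effectively quantifier-free, is the crux of the argument.
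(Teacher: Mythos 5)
Your forward direction has a genuine gap, and you have located it yourself: everything hinges on finding the least upper bound $x_{0}$ of the chain $\zeta$ in the bounded case, and that is exactly the monotone-convergence step that costs $\ACA_{0}$. The ``approximate supremum'' you invoke does not repair this: the order-$3$ collision in Theorem \ref{rathergen} needs the tail of the chain $I_{\Xi(\zeta(n))}^{\phi}$ to lie \emph{inside} the single interval $I_{\Xi(x_{0})}^{\phi}$, which uses the exact leastness of $x_{0}$; an approximation to within $\varepsilon$ gives no control over which interval of the refinement eventually absorbs the chain, and $\WKL$ applied to the countable chain only yields compactness for covers you already know to be covers. As it stands, the crux is an announced difficulty, not a proof. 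The paper sidesteps this entirely by a classical case split on $(\exists^{2})\vee\neg(\exists^{2})$: if $(\exists^{2})$ holds you are working in $\ACAo$ and Theorem \ref{rathergen} applies verbatim (so $\dim(I)=1$ does the work there); if $\neg(\exists^{2})$ holds, every $F^{2}$ is continuous, hence by $\WKL$ uniformly continuous and bounded on Cantor space, which yields a uniform lower bound $2^{-N_{0}}$ on the radii of covering intervals with rational centres and hence a finite sub-cover directly --- in that case $\dim(I)=1$ is not even needed. If you want to avoid that excluded-middle trick you need a genuinely new idea, not a refinement of the $\zeta$-argument.

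A smaller instance of the same omission occurs in your reverse direction: sorting the centres $y_{i}$ by midpoint and trimming radii ``so that only consecutive intervals overlap'' requires deciding order and equality between arbitrary reals, which is not available in $\RCAo$ without $\exists^{2}$. The paper again splits on $(\exists^{2})$, using $\exists^{2}$ to perform exactly this trimming in one case and replacing the sub-cover by one with rational centres (via the uniform-continuity argument) in the other. Your observations that $\HBT\rightarrow\WKL$ via countable covers and that an order-$1$ (pairwise disjoint) refinement cannot cover the connected interval are fine and match the intended reading of the statement.
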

\begin{proof}
For the forward direction, in case $(\exists^{2})$, the proof of the theorem goes through.  In case $\neg(\exists^{2})$, all $F:\R\di \R$ are continuous, while all $F^{2}$ are continuous on Baire space, and hence uniformly continuous (and thus bounded) on Cantor space by $\WKL$ (see \cite{kohlenbach2}*{Prop.\ 3.7 and 3.12} and \cite{kohlenbach4}*{Prop.~4.10}).  Now consider the following statement, which (only) holds since $\psi:I\di \R$ is continuous:
\be\label{piew}\textstyle
(\forall f\in C)(\exists q\in I\cap \Q)(\exists n\in \N)\underline{(\r(f)\in I_{q}^{\psi}\wedge \psi(q)\geq \frac{1}{2^{n}})}, 
\ee
where $\r(f)$ is $\sum_{n=0}^{\infty}\frac{f(n)}{2^{n}}$ for binary $f$, and where the underlined formula is $\Sigma^{0}_{1}$.  
Applying $\QFAC^{1,0}$ to \eqref{piew}, there is $\Xi^{2}$ such that $n\leq \Xi(f)$ in \eqref{piew}.  Since $\Xi$ is bounded on $C$, there is $N_{0}\in \N$ such that
\be\label{pieq2}\textstyle
(\forall f\in C)(\exists q\in I\cap \Q){(\r(f)\in I_{q}^{\psi}\wedge \psi(q)\geq \frac{1}{2^{N_{0}}})}, 
\ee
which immediately implies that $\cup_{x\in I}I_{x}^{\psi}$ has a finite sub-cover (generated by rationals), and the latter may be found by applying $\QFAC^{1,0}$ to \eqref{pieq2} and iterating the choice function at most $2^{N_{0}+1}$ times. 
Since $\frac{i}{2^{n}}$ has an obvious binary representation, we do not need to convert arbitrary $x\in I$ to binary.  
We obtain $\HBT$ in each case, and $(\exists^{2})\vee \neg(\exists^{2})$ finishes this direction of the proof. 

\smallskip

For the reverse direction, note that $\HBT\di \HBU\di \WKL$.  To prove $\dim(I)=1$, the finite sub-cover provided by $\HBT$ is readily converted to a refinement of order $1$ using $\exists^{2}$, as the latter functional can decide equality between real numbers.  
Now, in case $\neg(\exists^{2})$, obtain \eqref{pieq2} in the same way as above, and let $\Xi$ be a choice function that provides $\Xi(f)=q$.  
Define $\zeta$ as follows: $\zeta(0):=\Xi(00\dots)+\frac{1}{2^{N_{0}}} $ and $\zeta(n+1):= \Xi(\zeta(n))+\frac{1}{2^{N_{0}}}$.  
For $n> 2^{N_{0}+1}$, this function readily yields a finite open cover of $I$ that is also a refinement of the cover generated by $\psi$. 
Since all points are rationals, we can refine this cover to have order $1$, and $(\exists^{2})\vee \neg(\exists^{2})$ finishes the proof. 
\end{proof}
For future reference, we note that the proof also establishes $\RCAo+\neg(\exists^{2})+\WKL\vdash \HBT$, i.e.\ the axiom of choice is not used. 

\smallskip

It is a natural question (posed before by Hirschfeldt; see \cite{montahue}*{\S6.1}) whether the axiom of choice is really necessary in the previous (and below) theorems.  
We answer this question in the negative in Section \ref{kerkend}.

\smallskip

%
%
Next, in order to prove the next corollary concerning Urysohn's identity, we introduce the notion of inductive definition as in \cite{engeltjemijn}*{\S1.1.1}.
\bdefi[Inductive dimension]\label{roolin} We inductively define the \emph{small inductive dimension} $ \ind X$ for a topological space $X$ as follows. 
\begin{enumerate}
\item[(d1)] For the empty set $\emptyset$, we define ${\textup{ind }} \emptyset={\textup{Ind }}\emptyset=-1$;
\item[(d2)] $\ind X\leq n$, where $n = 0,1,\dots,$ if for every point $x \in X$ and each neighbourhood $V \subset X$ of the point $x$ there exists an open set $U \subset X$ such that $x\in U\subset V$ and $\ind (\partial U)<n-1$;
\item[(d3)]  $\ind X = n$ if $\ind X \leq n$ and $\ind X > n- 1$, i.e., the inequality $\ind X <n- 1$ does not hold;
\item[(d4)] $\ind X= \infty$ if $\ind X>n$ for $n=-1,0,1,...$.  
\end{enumerate}
The \emph{large inductive dimension} $\Ind X$ is obtained by replacing (d2) by:
\begin{enumerate}
\item[(d$2^{*}$)] $\Ind X<n$, where $n=0,1,...,$ if for every closed set $A\subset X$ and
each open set $V \subset X$ which contains the set $A$ there exists an open set $U \subset X$ such that $A\subset U \subset V$ and $\Ind (\partial U)< n-1$.
\end{enumerate}
If $X$ is Euclidean space, $V$ is generally chosen to be a ball centred at $x$.
\edefi 
In light of Definition \ref{roolin}, the (small and large) inductive dimension of singletons of real numbers, or the unit interval, makes sense in $\RCAo$, and is respectively $0$ and $1$.  
Moreover, the \emph{Urysohn identity} is the statement that $\dim X=\textup{ind} X=\textup{Ind} X$, and holds for a large class of spaces $X$; this identity constitutes one of the main problems in \emph{dimension theory}, according to \cite{enc2}*{p.\ 274}, while it is called the \emph{the fundamental theorem of dimension theory} in \cite{engeltjemijn}.
\begin{cor}\label{hoerke}
The system $\RCAo+\QFAC^{1,1}$ proves that $\HBT$ is equivalent to: the conjunction of  $\WKL$ and Urysohn's identity for the unit interval.
\end{cor}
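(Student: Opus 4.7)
The plan is to reduce the equivalence to Corollary \ref{dorkeeee}, exploiting the remark following Definition \ref{roolin} that $\ind([0,1])=\Ind([0,1])=1$ already in $\RCAo$. Under this observation, Urysohn's identity for $[0,1]$, namely $\dim([0,1])=\ind([0,1])=\Ind([0,1])$, is equivalent over $\RCAo$ to the single assertion $\dim([0,1])=1$. Hence the task reduces to establishing the equivalence
\[
\HBT \;\asa\; \bigl(\WKL \wedge [\dim([0,1])=1]\bigr),
\]
which is essentially Corollary \ref{dorkeeee}.

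For the forward direction, assume $\HBT$. Since $\HBT\di \HBU\di \WKL$, the first conjunct is immediate. For $\dim([0,1])=1$, I would mimic the reverse direction of Corollary \ref{dorkeeee}: given an open cover $\psi:I\di \R$ of $[0,1]$, apply $\HBT$ to extract a finite sub-cover, and refine it to the appropriate order by splitting on $(\exists^{2})\vee \neg(\exists^{2})$ exactly as in the cited proof. The matching lower bound $\dim([0,1])\geq 1$ is a routine $\RCAo$ exercise (any finite open cover of a non-degenerate interval must contain two members overlapping in an open set). Combined with the $\RCAo$-provable values $\ind([0,1])=\Ind([0,1])=1$, this yields Urysohn's identity for $[0,1]$.

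For the reverse direction, assume $\WKL$ together with $\dim([0,1])=\ind([0,1])=\Ind([0,1])$. Since $\ind([0,1])=1$ is provable in $\RCAo$, the identity yields $\dim([0,1])=1$, and Corollary \ref{dorkeeee} then delivers $\HBT$ at once.

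The only genuine technical content sits inside the refinement argument of Corollary \ref{dorkeeee} (the case split on $(\exists^{2})$), which is already available; everything else is bookkeeping around Definition \ref{roolin} and the computation of the inductive dimensions of $[0,1]$ in $\RCAo$. I therefore expect no further obstacle beyond carefully unfolding Definition \ref{roolin} for the unit interval to confirm $\ind([0,1])=\Ind([0,1])=1$, which amounts to constructing open subintervals of rational endpoints whose two-point boundaries have inductive dimension $0$.
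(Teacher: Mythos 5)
Your proposal is correct and takes essentially the same route as the paper: the paper's proof is literally ``Immediate from Corollary \ref{dorkeeee}'', resting on exactly the observation you make, namely that $\textup{ind}([0,1])=\textup{Ind}([0,1])=1$ is already available in $\RCAo$ (as noted after Definition \ref{roolin}), so that Urysohn's identity for $[0,1]$ collapses to $\dim([0,1])=1$ and Corollary \ref{dorkeeee} applies directly.
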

\begin{proof}
Immediate from Corollary \ref{dorkeeee}.
\end{proof}

\subsection{Paracompactness}\label{diemensie2}
The notion of \emph{paracompactness} was introduced in 1944 by Dieudonn\'e in \cite{nogeengodsgeschenk} and plays an important role in the characterisation of metrisable spaces via e.g.\ \emph{Smirnov's metrisation theorem} (\cite{munkies}*{p.\ 261}).  
The fact that every metric space is paracompact is \emph{Stone's theorem} (see \cites{goodgoing,stoner2} and \cite{munkies}*{p.\ 252}). 

\smallskip

Our interest in paracompactness stems in part from its occurrence in classical RM (see e.g.\ \cites{simpson2, mummymf, mummyphd}), as detailed in Remark \ref{diemummy}.  
The aim of this section is to show that there is a \emph{huge} difference in logical and computational hardness between the `second-order/countable' version of paracompactness, and the `actual' definition.  
Indeed, the fact that the unit interval is paracompact implies $\HBT$; moreover, the latter can be `split' into the former plus $\WKL$ by Corollary \ref{XYW}.  

\smallskip

Munkres states the following definition of paracompactness in \cite{munkies}*{p.\ 253}.  
\bdefi[Locally finite]
A collection $\mathcal{A}$ of subsets of a space $X$ is \emph{locally finite} if any $x\in X$ has a neighbourhood that intersects only finitely many $A\in \mathcal{A}$.
\edefi
\bdefi[Paracompact]
A space $X$ is \emph{paracompact} if every open covering $\mathcal{A}$ of $X$ has a locally
finite open refinement $\mathcal{B}$ that covers $X$.
\edefi
With these definitions, the statement that the unit interval is paracompact, makes sense in $\RCAo$.  
By \emph{Stone's theorem}, a metric space is paracompact, but this fact is not provable in $\ZF$ alone (see \cite{goodgoing}).  
Similarly, Stone's theorem \emph{for the unit interval} is not provable in any system $\SIXK$ by the following theorem.  
Note that the results in Section \ref{kerkend} yield a proof in $\Z_{2}^{\Omega}$ of the paracompactness of $[0,1]$.  
\begin{thm}\label{paramaeremki}
The system $\ACAo+\QFAC^{1,1}+ \textup{`$[0,1]$ is paracompact'}$ proves $\HBT$. 
\end{thm}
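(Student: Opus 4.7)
My plan is to adapt the walking argument from the proof of Theorem~\ref{rathergen}, replacing the order-at-most-$1$ refinement by a locally finite one and using the local-finiteness witness in place of the order bound to derive the contradiction at the limit point. Fix $\psi:I\di \R$ with $\cup_{x\in I}I_x^\psi$ covering $[0,1]$. Applying paracompactness, I obtain a locally finite open refinement, which in $\RCAo$ I represent by some $\phi:I\di \R$ together with a functional that, for each $x\in I$, produces a positive rational radius $r_x$ and a finite list of reals $y^{x}_1,\dots,y^{x}_{k_x}$ enumerating all $y\in I$ for which $I_y^\phi \cap (x-r_x, x+r_x) \neq \emptyset$. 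Using $\QFAC^{1,1}$ on the clause $(\forall x\in I)(\exists y\in I)(x\in I_y^\phi)$, I extract $\Xi^{1\di 1}$ with $x\in I^\phi_{\Xi(x)}$, and then define $\zeta^{0\di 1}$ by $\zeta(0):=\Xi(0)+\phi(\Xi(0))$ and $\zeta(n+1):=\Xi(\zeta(n))+\phi(\Xi(\zeta(n)))$, exactly as in Theorem~\ref{rathergen}. Since $\zeta(n)\in I^\phi_{\Xi(\zeta(n))}$ forces $\phi(\Xi(\zeta(n)))>0$, the sequence $\zeta$ is strictly increasing as long as it remains in $I$.

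I then split on whether $(\exists x\in I)(\forall n\in\N)(\zeta(n)<_\R x)$ holds. If it fails, some $\zeta(n_0)\geq_\R 1$, and $I^\phi_{\Xi(0)},I^\phi_{\Xi(\zeta(0))},\dots,I^\phi_{\Xi(\zeta(n_0))}$ is a finite sub-cover of the refinement; a further application of $\QFAC^{1,1}$ to the refinement condition $(\forall x\in I)(\exists y\in I)(I_x^\phi\subseteq I_y^\psi)$ converts this into a finite sub-cover of $\cup_{x\in I}I_x^\psi$, yielding $\HBT$. If it holds, the monotone convergence theorem (available in $\ACAo$) furnishes the least upper bound $x_0$ of $\zeta$. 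The local-finiteness witness at $x_0$ yields a neighbourhood $U$ of $x_0$ meeting only finitely many of the intervals $I^\phi_y$; since $\zeta(n)\in U$ for all large $n$ and $\zeta(n)\in I^\phi_{\Xi(\zeta(n))}$, the parameter $\Xi(\zeta(n))$ must range over a finite set for $n\geq N$. But then $\zeta(n+1)=\Xi(\zeta(n))+\phi(\Xi(\zeta(n)))$ also takes only finitely many values for $n\geq N$, contradicting the strict monotonicity of $\zeta$.

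The main obstacle I anticipate is pinning down a representation of `locally finite refinement' in higher-order RM that remains faithful to Munkres' definition while supplying the finiteness data via a quantifier-free choice principle (so that the witness $r_x, y^{x}_1,\dots,y^{x}_{k_x}$ can be produced functionally in $x$). A secondary subtlety is that $\phi$ may in principle take non-positive values and produce empty $I_x^\phi$, but since strict monotonicity of $\zeta$ only requires $\phi(\Xi(\zeta(n)))>0$, which is forced by $\zeta(n)\in I^\phi_{\Xi(\zeta(n))}$, this is harmless. Once the representation is fixed, the rest is a direct translation of the Theorem~\ref{rathergen} template, and the dichotomy above should yield a splitting result analogous to Corollary~\ref{dorkeeee}, in which paracompactness of $[0,1]$ together with $\WKL$ is equivalent to $\HBT$.
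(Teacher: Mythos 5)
Your proposal is correct and follows essentially the same route as the paper: the walking sequence $\zeta$ from Theorem~\ref{rathergen}, the dichotomy on \eqref{tuigs}, and a contradiction with local finiteness at the least upper bound $x_{0}$ (the paper phrases this as every neighbourhood of $x_{0}$ meeting infinitely many distinct refinement intervals, while you dually note that finitely many intervals would force $\zeta$ to take finitely many values; these are the same observation). The extra apparatus you worry about for representing local finiteness is not needed, since the definition is only invoked at the single point $x_{0}$.
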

\begin{proof}
We use the proof of Theorem \ref{rathergen} with minor modification.
Let $\psi:I\di \R$ be such that $\cup_{x\in I}I_{x}^{\psi}$ covers $[0, 1]$, and let $\phi:I\di \R$ be a locally finite refinement. 
Assume \eqref{tuigs}, where let $x_{0}\in I,\zeta^{0\di 1}, \Xi^{1,1}$ are as in the aforementioned proof.    
Clearly, any neighbourhood of $x_{0}$ will contain all intervals $I^{\phi}_{\Xi(\zeta(n))}$ for $n$ large enough.  
This observation contradicts the assumption that $[0,1]$ is paracompact, and hence \eqref{tuigs} must be false, implying $\HBT$ as in the proof of Theorem \ref{rathergen}.  
\end{proof}
The following corollary is proved in the same way as Corollary \ref{dorkeeee}; the left-hand side constitutes a proper `splitting' of $\HBT$, as the $\ECF$-translation of `$[0,1]$ is paracompact' is essentially the statement that $[0,1]$ is \emph{countably} paracompact, and the latter is provable in $\RCA_{0}$ by \cite{simpson2}*{II.7.2}.
\begin{cor}\label{XYW}
 $\RCAo+\QFAC^{1,1}$ proves $[\WKL + \textup{`$[0,1]$ is paracompact'}]\asa \HBT$. 
\end{cor}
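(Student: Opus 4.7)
The plan is to adapt the proof of Corollary \ref{dorkeeee} by replacing the dimension argument with the paracompactness argument; as in that proof, a case split on $(\exists^{2}) \vee \neg(\exists^{2})$ drives both directions.

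For the forward direction $[\WKL + \textup{`$[0,1]$ is paracompact'}] \di \HBT$: in case $(\exists^{2})$ we are in $\ACAo + \QFAC^{1,1}$ together with paracompactness of $[0,1]$, so Theorem \ref{paramaeremki} directly yields $\HBT$. In case $\neg(\exists^{2})$, all functionals $\R\di\R$ are continuous and all $F^{2}$ are uniformly continuous on $C$ by $\WKL$, and we follow the $\neg(\exists^{2})$ sub-proof of Corollary \ref{dorkeeee} verbatim: the given $\psi: I\di\R$ yields a finite (in fact, rational) sub-cover via \eqref{piew}--\eqref{pieq2} and $\QFAC^{1,0}$, giving $\HBT$ without any appeal to paracompactness.

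For the reverse direction $\HBT \di [\WKL + \textup{paracompact}]$, the implications $\HBT \di \HBU \di \WKL$ are standard (the first since $\HBU$ is just the special case of $\HBT$ for $\Psi : I \di \R^{+}$). To derive paracompactness, let $\psi : I \di \R$ cover $I$ and apply $\HBT$ to get $y_{1},\dots,y_{k} \in I$ whose associated intervals still cover $I$. Assuming $\exists^{2}$, define $\phi : I \di \R$ by $\phi(x) := \psi(y_{i})$ when $x =_{\R} y_{i}$ for some $i$ (decidable using $\exists^{2}$), and $\phi(x) := 0$ otherwise. Without $\exists^{2}$, the argument from Corollary \ref{dorkeeee} instead produces a finite sub-cover with \emph{rational} centres $q_{1},\dots,q_{k}$, and the analogous definition of $\phi$ is then well-defined in $\RCAo$ since equality with a fixed rational is decidable. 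In either case the only non-empty $I_{x}^{\phi}$ are among the $k$ intervals above, so $\phi$ covers $I$, refines $\psi$, and is locally finite (any neighbourhood intersects at most $k$ of the non-empty $I_{x}^{\phi}$).

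The main obstacle is the mismatch between the set-theoretic notion of refinement --- for which a finite sub-cover is trivially a locally finite refinement --- and its higher-order representation as a single function $\phi : I \di \R$, which forces us to assign a value at \emph{every} point of $I$ and hence to distinguish the finitely many centres from every other real. The case split on $(\exists^{2})$ is the standard device that resolves this uniformity issue, exactly as in Corollary \ref{dorkeeee}; in particular, the $\ECF$-interpretation of `$[0,1]$ is paracompact' reduces to the countable version provable in $\RCA_{0}$, explaining why a genuine splitting of $\HBT$ is obtained here.
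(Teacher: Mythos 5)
Your overall strategy---the forward direction via Theorem \ref{paramaeremki} under $(\exists^{2})$ and via the $\WKL$-argument of Corollary \ref{dorkeeee} under $\neg(\exists^{2})$, the reverse direction by converting the finite sub-cover supplied by $\HBT$ into a locally finite refinement---is the route the paper intends (it gives no separate proof, only the remark that the corollary is proved in the same way as Corollary \ref{dorkeeee}). The forward direction and the $(\exists^{2})$-branch of the reverse direction are fine as you state them.

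The gap is in the $\neg(\exists^{2})$-branch of the reverse direction. The claim that ``equality with a fixed rational is decidable in $\RCAo$'' is false: for a real $x$ given as a fast-converging Cauchy sequence, $x=_{\R}q$ is a $\Pi_{1}^{0}$-statement, and deciding it uniformly in $x$ is exactly the kind of discontinuous operation that yields $(\exists^{2})$. More to the point, the function $\phi$ you define---strictly positive at the finitely many centres $q_{1},\dots,q_{k}$ and zero everywhere else---is discontinuous, while under $\neg(\exists^{2})$ every $F:\R\di\R$ is continuous by \cite{kohlenbach2}*{Prop.\ 3.7}; so in that case your $\phi$ provably does not exist and the construction cannot be repaired along these lines. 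The intended argument under $\neg(\exists^{2})$ is different in kind: since $\psi$ and any candidate refinement are then continuous and hence admit countable codes, the paracompactness of $[0,1]$ reduces to its countable/second-order version, which is provable in $\RCA_{0}$ by \cite{simpson2}*{II.7.2}---this is precisely how the paper handles the analogous statement for $\R$ in the corollary following Theorem \ref{brokken} (``paracompactness reduces to countable paracompactness''). You should replace the pointwise case-definition of $\phi$ in the $\neg(\exists^{2})$-branch by an appeal to that reduction; with that change the proof matches the paper's.
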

Another interpretation of the previous corollary is as follows: by the results in \cite{wienszoon}, the notion of compactness is equivalent to `paracompact plus pseudo-compact' for a large class of spaces, and pseudo-compactness essentially expresses that continuous functions are bounded on the space at hand, i.e.\ the pseudo-compactness of $[0,1]$ is equivalent to $\WKL$ by \cite{simpson2}*{IV.2.3} and \cite{kohlenbach4}*{Prop.\ 4.10}. 

\smallskip

The following remark highlights the difference between `actual' and `second-order/countable' paracompactness.  It also suggests formulating Corollary \ref{thetam}. 
\begin{rem}[Paracompactness in second-order RM]\label{diemummy}\rm
Simpson proves in \cite{simpson2}*{II.7.2} that over $\RCA_{0}$, complete separable metric spaces are \emph{countably} paracompact\footnote{The notion of `countably paracompact' is well-known from Dowker's theorem (see e.g.\ \cite{ooskelly}*{p.~172}), but Simpson and Mummert do not use the qualifier `countable' in \cites{simpson2,mummymf}.}, 
and Mummert in \cite{mummymf}*{Lemma 4.11} defines a realiser for paracompactness as in \cite{simpson2}*{II.7.2} inside $\ACA_{0}$.  This realiser plays a crucial role in the proof of Mummert's metrisation theorem, called `MFMT', inside $\SIX$ (see \cite{mummymf}*{\S4}).  
Note that $\SIX$ occurs elsewhere in the RM of topology (\cite{mummy, mummyphd}).  By Theorem~\ref{paramaeremki}, the (higher-order) statement \emph{the unit interval is paracompact} is equivalent to $\HBT$, and hence not provable in $\cup_{k}\SIXK$, i.e.\ there is a 
\emph{huge} difference in strength between `second-order/countable' and `actual' paracompactness.  In fact, the logical hardness of the aforementioned statement dwarfs $\SIX$ from the RM of topology.  
\end{rem}
Let us call $\Omega^{\mathbb{1}\di\mathbb{1}}$ a `realiser for the paracompactness of $[0,1]$' if $\Omega(\psi)(1):I\di \R$ yields a locally finite open refinement of the cover associated to $\psi:I\di \R$, and if 
\be\label{popol}
(\forall x\in I)(I_{x}^{\Omega(\psi)(1)}\subseteq I_{\Omega(\psi)(2)(x)}^{\psi}),
\ee
i.e.\ the refining cover is `effectively' included in the original one, just like in \cite{simpson2, mummymf}.
\begin{cor}\label{thetam}
A realiser $\Omega^{\mathbb{1}\di \mathbb{1}}$ for the paracompactness of $[0,1]$, together with Feferman's $\mu$, computes $\Theta$ such that $\SCF(\Theta)$ via a term of G\"odel's $T$.
\end{cor}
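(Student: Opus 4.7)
The plan is to extract a closed term of G\"odel's $T$ from the proof of Theorem~\ref{paramaeremki}, composed with the effective equivalence $\HBU\asa \HBU_{\c}$ from \cite{dagsamIII}*{Theorem~3.3}. Given $\Omega$ and $\mu$, I would first build a realiser $\Lambda$ for $\HBT$: for each $\psi:I\di\R$ whose associated intervals cover $I$, $\Lambda(\psi)$ returns a finite list witnessing $\HBT$ for $\psi$. This $\Lambda$ trivially yields a realiser for $\HBU$ (the special case $\Psi:I\di\R^{+}$), and the effective version of $\HBU\asa \HBU_{\c}$ then converts the latter, with the aid of $\mu$, into the desired $\Theta$ satisfying $\SCF(\Theta)$.

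For the construction of $\Lambda$, I would transcribe the proof of Theorem~\ref{paramaeremki}. Given $\psi:I\di\R$, set $\phi:=\Omega(\psi)(1)$ and $F:=\Omega(\psi)(2)$, so that $\phi$ is a locally finite open refinement of $\psi$ covering $I$, and $F$ satisfies \eqref{popol}. Using $\mu$, compute a choice functional $\Xi:I\di I$ witnessing $(\forall x\in I)(x\in I_{\Xi(x)}^{\phi})$, and then define $\zeta$ by primitive recursion via $\zeta(0):=\Xi(0)+\phi(\Xi(0))$ and $\zeta(n+1):=\Xi(\zeta(n))+\phi(\Xi(\zeta(n)))$. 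By the argument in the proof of Theorem~\ref{paramaeremki}, \eqref{tuigs} is ruled out by the local finiteness of $\phi$ at the hypothetical least upper bound $x_{0}$, so there must exist $n_{0}\in\N$ with $\zeta(n_{0})\geq_{\R}1$; $\mu$ locates the least such $n_{0}$, and the list $F(\Xi(0)),F(\Xi(\zeta(0))),\dots,F(\Xi(\zeta(n_{0}-1)))$ is the finite sub-cover of $\psi$ returned as $\Lambda(\psi)$.

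The main obstacle is computing $\Xi$ from $\phi$ using only $\mu$, since the analogous step in the proof of Theorem~\ref{rathergen} invokes $\QFAC^{1,1}$. The natural remedy is a $\mu$-search over $\Q\cap I$: for each required argument $x$, enumerate rationals $q$ and test via rational approximations whether $|x-q|<\phi(q)$. Here one exploits that $\Lambda$ is only ever applied downstream to $\psi$'s arising in the reduction from $\HBU$ to $\HBU_{\c}$, for which $\phi$ inherits enough structure to admit rational centres. Once $\Xi$ is available, the remaining operations are primitive recursive in $\Omega,F,\phi,$ and $\mu$, so composing with the effective conversion $\HBU\to\HBU_{\c}$ yields the required closed term of G\"odel's $T$.
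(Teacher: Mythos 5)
Your overall architecture---transcribe the proof of Theorem \ref{paramaeremki} into a term of G\"odel's $T$ and compose with the effective passage from a realiser for $\HBU$ to one for $\HBU_{\c}$---is exactly the paper's, and your use of \eqref{popol} to replace the second application of $\QFAC^{1,1}$ (mapping refinement intervals back to intervals of the original cover) is correct. The gap is in your treatment of the first application, i.e.\ the computation of $\Xi$. The rational search is not sound: nothing in the specification of $\Omega$ forces the locally finite refinement $\phi=\Omega(\psi)(1)$ to cover points by intervals with rational centres. For instance, $\phi$ may satisfy $\phi(q)\leq 0$ for every $q\in\Q\cap I$ while finitely many irrational centres still yield a locally finite refinement covering $I$; your search over $\Q\cap I$ then diverges at every $x$. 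The appeal to ``$\psi$'s arising in the reduction from $\HBU$ to $\HBU_{\c}$'' does not repair this: the structure of $\phi$ is determined by $\Omega$, not by the provenance of $\psi$, and the reduction in any case runs in the other direction (the realiser for $\HBU$ is what gets converted into $\Theta$, so your $\Lambda$ must handle arbitrary canonical $\Psi$). Relatedly, your plan to first realise full $\HBT$ cannot work: for a general $\psi:I\di\R$ even a single witness $y$ with $x\in I_{y}^{\psi}$ is not computable from $\psi$, $\Omega$ and $\mu$, which is precisely why $\QFAC^{1,1}$ appears in Theorems \ref{rathergen} and \ref{paramaeremki}.

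The paper's fix is simpler and is the one you need: the corollary only concerns the canonical covers of $\HBU$, i.e.\ those generated by $\Psi:I\di\R^{+}$, for which every $x$ is covered by its own interval $I_{x}^{\Psi}$, so the choice functional $\Xi$ may be taken to be the identity and no search is required. One then runs the recursion $\zeta$ with $\Xi=\mathrm{id}$, uses the locally finite refinement $\Omega(\Psi)(1)$ only to refute \eqref{tuigs} (which guarantees that the $\mu$-search for the least $n_{0}$ with $\zeta(n_{0})\geq_{\R}1$ terminates), and invokes \eqref{popol} as you do to output intervals of the original cover; composing with the $T$-definable conversion of a realiser for $\HBU$ into one for $\HBU_{\c}$ from \cite{dagsamIII}*{\S3.1} yields $\Theta$ with $\SCF(\Theta)$.
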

\begin{proof}
Immediate from the proof of Theorems \ref{rathergen} and \ref{paramaeremki}.  Note that $\Xi$ is the identity function in case we consider covers generated by $\Psi:I\di \R^{+}$ as in $\HBU$. 
Furthermore, a realiser for $\HBU$ computes a realiser for $\HBU_{\c}$, i.e.\ the special fan functional, via a term in G\"odel's $T$, as discussed in \cite{dagsamIII}*{\S3.1}
\end{proof}
As it turns out, the condition \eqref{popol} for a realiser for paracompactness has already been considered, namely as follows.  
\begin{quote}
all proofs of Stone's Theorem (known to the authors) actually prove a stronger conclusion which implies $\mathsf{AC}$. It
is based on an idea from [\dots]. Let us call a refinement $\mathcal{V}$ of $\mathcal{U}$ \emph{effective} if there is
a function $a : \mathcal{V} \di \mathcal{U}$ such that $V \subset a(V)$ for all $V \in  \mathcal{V}$. (\cite{goodgoing}*{p.\ 1217})
\end{quote}
As it turns out, the notion of `effectively paracompact' is intimately connected to the Lindel\"of lemma, as discussed in Section \ref{unite}. 


\subsection{Covers in higher-order arithmetic}\label{kerkend}
In Section \ref{introke}, we introduced a generalisation of the notion of cover used in \cite{dagsamIII, dagsamV}, while we used this notion in Sections~\ref{diemensie} and \ref{diemensie2} to obtain RM results.  
In this section, we show that these RM results have some robustness: we show that the new notion of cover yields covering lemmas equivalent to the original ones (with the definition from \cite{dagsamIII, dagsamV}), even in the absence of the axiom of choice. 
We also show that \emph{any} notion of cover definable in second-order arithmetic inherits these `nice' properties.  We treat the Heine-Borel theorem, the Lindel\"of lemma, as well as theorems pertaining to partitions of unity. 
\subsubsection{The Heine-Borel theorem}
We prove $\HBT\asa \HBU$ with and without the axiom of choice in the base theory. 
In this way, we observe that our new notion of cover does not really change the Heine-Borel theorem.
\begin{thm}\label{ziedenauw}
The system $\RCA_{0}^{\omega}+\QFAC^{1,1}$ proves $\HBU\asa \HBT$.
\end{thm}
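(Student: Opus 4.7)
The plan is to treat the two directions separately, with the forward implication $\HBT \to \HBU$ essentially trivial and the converse being the content-bearing part, where $\QFAC^{1,1}$ is used exactly once to pass from the weaker notion of cover (possibly empty intervals, existentially quantified centres) back to the canonical cover governed by a strictly positive radius functional.

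For $\HBT \to \HBU$, given any $\Psi : \R \to \R^{+}$, I would restrict it to obtain $\psi := \Psi\restriction I : I \to \R$. Since $\Psi(x) > 0$ for every $x$, the interval $I_x^{\psi}$ coincides with $I_x^{\Psi}$ and in particular contains $x$, so the cover hypothesis of $\HBT$ holds automatically. The finite subcover produced by $\HBT$ is then already a finite subcover in the sense of $\HBU$. This direction uses neither choice nor $(\exists^{2})$.

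For $\HBU \to \HBT$, fix $\psi : I \to \R$ with $(\forall x\in I)(\exists y\in I)(x\in I_{y}^{\psi})$. The matrix unfolds to $|x-y|<_{\R}\psi(y)$, which after pairing the (rational) witness to the real inequality with $y$ becomes a quantifier-free formula, so $\QFAC^{1,1}$ yields a functional $\Xi:\R\to\R$ such that $\Xi(x)\in I$ and $x\in I_{\Xi(x)}^{\psi}$ for every $x\in I$. In particular $\psi(\Xi(x))>|x-\Xi(x)|\geq 0$, and I can now manufacture a genuine positive radius functional
\[
\Psi(x):=\psi(\Xi(x))-|x-\Xi(x)| \text{ for } x\in I, \qquad \Psi(x):=1 \text{ otherwise,}
\]
so that $\Psi:\R\to\R^{+}$. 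A routine interval computation shows $I_{x}^{\Psi}\subseteq I_{\Xi(x)}^{\psi}$ for $x\in I$. Applying $\HBU$ to $\Psi$ delivers $y_{1},\dots,y_{k}\in I$ with $I\subseteq \bigcup_{i\leq k}I_{y_{i}}^{\Psi}\subseteq \bigcup_{i\leq k}I_{\Xi(y_{i})}^{\psi}$, which is the finite subcover demanded by $\HBT$.

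The only delicate point is the invocation of $\QFAC^{1,1}$ on a matrix that is not literally quantifier-free but rather $\Sigma^{0}_{1}$ with real parameters; the standard workaround is to absorb the inner existential over rationals into the choice variable by pairing, which is precisely the manoeuvre already used in the proofs of Theorems \ref{rathergen} and \ref{paramaeremki}. After that, the inclusion $I_{x}^{\Psi}\subseteq I_{\Xi(x)}^{\psi}$ is immediate from the triangle inequality and the definition of $\Psi$, so there is no further obstacle.
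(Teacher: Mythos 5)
Your overall strategy coincides with the paper's: the direction $\HBT\di\HBU$ is immediate, and for $\HBU\di\HBT$ one applies $\QFAC^{1,1}$ to $(\forall x\in I)(\exists y\in I)(x\in I_{y}^{\psi})$ to obtain $\Xi$ with $x\in I_{\Xi(x)}^{\psi}$, and then shrinks each $I_{\Xi(x)}^{\psi}$ to a canonical interval centred at $x$; your $\Psi(x)=\psi(\Xi(x))-|x-\Xi(x)|$ is the same quantity as the paper's minimum of the distances from $x$ to the two endpoints of $I_{\Xi(x)}^{\psi}$. However, there is a genuine gap: the functional $\Xi$ delivered by $\QFAC^{1,1}$ acts on \emph{codes} of reals (fast-converging Cauchy sequences), and nothing guarantees that it is extensional, i.e.\ that $x=_{\R}x'$ implies $\Xi(x)=_{\R}\Xi(x')$. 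Consequently your $\Psi$ need not map equal reals to equal reals, so it is not a function $\R\di\R^{+}$ in the sense of Definition \ref{keepintireal}, and $\HBU$ as stated does not apply to it. The paper repairs this by invoking $(\exists^{2})$ to convert every real in $I$ to a unique binary representation before feeding it to $\Xi$, which forces extensionality --- but $(\exists^{2})$ is not part of the base theory $\RCAo+\QFAC^{1,1}$, so this repair cannot be performed unconditionally.

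This is exactly why the paper's proof proceeds by the classical case distinction $(\exists^{2})\vee\neg(\exists^{2})$. In the case $(\exists^{2})$, your argument goes through essentially verbatim after normalising representations. In the case $\neg(\exists^{2})$, a different argument is required: all type-two functionals are then continuous, $\HBU$ yields $\WKL$, and one derives $\HBT$ from $\WKL$ via the uniform continuity (and boundedness) argument on Cantor space from the proof of Corollary \ref{dorkeeee}, with no appeal to a choice functional $\Xi$ at all. Your proposal addresses neither the extensionality issue nor this second case, so as written it does not establish the theorem over the stated base theory; adding the case split (or strengthening the base theory by $(\exists^{2})$) would complete it.
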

\begin{proof}
The reverse direction is immediate.  For the forward direction, in case $\neg(\exists^{2})$, we obtain $\HBU\di \WKL$ and proceed as in the proof of Corollary \ref{dorkeeee}. 
In case $(\exists^{2})$, let $\psi$ be as in $\HBT$ and consider $(\forall x\in I)(\exists y\in I)(x\in I_{y}^{\psi})$.  Since the innermost formula is $\Sigma_{1}^{0}$, we may 
apply $\QFAC^{1,1}$ to obtain $\Xi$ such that $(\forall x\in I)(x\in I_{\Xi(x)}^{\psi})$.  Since $\exists^{2}$ provides a functional that converts real numbers in $I$ to a unique binary representation, we may assume that $\Xi$ is extensional on the reals.   
Now define $\Psi:I\di \R^{+}$ by $\Psi(x):=\min\big(|x-(\Xi(x)-\psi(\Xi(x)))|, |x-(\Xi(x)+\psi(\Xi(x)))|\big)$, and note that $I^{\Psi}_{x}\subseteq I_{\Xi(x)}^{\psi}$.  
Applying $\HBU$, we obtain a finite sub-cover of $\cup_{x\in I}I_{x}^{\Psi}$, say generated by $y_{1}, \dots, y_{k}\in I $, and $\cup_{i\leq k}I_{\Xi(y_{i})}^{\psi}$ is then a finite sub-cover of $\cup_{x\in I}I_{x}^{\psi}$.
\end{proof}
Recall that $\HBU$ is provable in $\Z_{2}^{\Omega}$ by \cite{dagsamV}*{\S4}, i.e.\ without the axiom of choice.  
While the use of $\QFAC^{1,1}$ in $\HBU\di \HBT$ \emph{seems} essential, it is in fact not, by the following theorem.  
Note that $\textsf{IND}$ is the induction axiom for all formulas in the language of $\RCAo$; the base theory is not stronger than Peano arithmetic. 
\begin{thm}\label{fugu}
The system $\RCAo+\textsf{\textup{IND}}+(\kappa_{0}^{3})$ proves $\HBU\asa \HBT$
\end{thm}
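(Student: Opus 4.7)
The plan is to prove the reverse direction trivially (since $\HBU$ is the restriction of $\HBT$ to $\psi:I\to\R^+$) and then argue the forward direction by the dichotomy $(\exists^2)\vee\neg(\exists^2)$, provable in $\RCAo$. In the branch $\neg(\exists^2)$, every $F^2$ is continuous on Baire space and hence uniformly continuous on $C$ via $\WKL$ (which follows from $\HBU$). I would reuse the $\neg(\exists^2)$-branch of the proof of Corollary \ref{dorkeeee} essentially verbatim: the continuity of $\psi$ gives \eqref{piew}, and $\QFAC^{1,0}$ (part of $\RCAo$) together with the $\WKL$-bound on continuous functionals over $C$ produces a uniform $N_0$ and hence a finite sub-cover. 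Neither $(\kappa_0^3)$ nor $\textsf{IND}$ enters this branch.

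In the opposite branch $(\exists^2)$, I would invoke Kohlenbach's equivalence $[(\exists^2)+(\kappa_0^3)]\asa(\exists^3)$ from \cite{dagsam}*{Rem.\ 6.13} to promote $(\kappa_0^3)$ to the full comprehension functional $\exists^3$. With $\exists^3$ available, define $\rho:I\to\R^+$ by
$$
\rho(x) := \sup\{\psi(y)-|x-y|\,:\,y\in I\wedge |x-y|<\psi(y)\},
$$
noting that for any rational $q$ the predicate $(\exists y\in I)(\psi(y)-|x-y|>q)$ is arithmetic in $(\exists^2)$ and therefore decided by $(\exists^3)$, so a binary search on $q$ produces a fast-converging Cauchy code for $\rho(x)$. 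The cover hypothesis forces $\rho(x)>0$ at every $x\in I$, while extensionality of $|\cdot|$ and $\psi$ makes $\rho$ itself extensional (hence a legitimate functional of type $1\to 1$). Now apply $\HBU$ to $\rho/2$, obtaining $x_1,\dots,x_k\in I$ with $I\subseteq\bigcup_{i\leq k}I_{x_i}^{\rho(x_i)/2}$. By the very definition of $\rho$, for each $i$ there exists $y_i\in I$ with $\psi(y_i)-|x_i-y_i|>\rho(x_i)/2$, and the triangle inequality then gives $I_{x_i}^{\rho(x_i)/2}\subseteq I_{y_i}^\psi$.

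The concluding move is a \emph{finite choice} step: from $(\forall i\leq k)(\exists y\in I)\,A(i,y)$, where $A(i,y)\equiv[I_{x_i}^{\rho(x_i)/2}\subseteq I_{y}^\psi]$, extract a single finite sequence $(y_1,\dots,y_k)$ witnessing the $\exists y$'s simultaneously; then $\{I_{y_i}^\psi:i\leq k\}$ is the desired finite sub-cover for $\HBT$. This finite choice is proved by induction on $k$, but the natural induction formula $(\exists\vec y\in I^{j})(\forall i<j)\,A(i,y_i)$ carries a type-$1$ existential, so the quantifier-free induction available in plain $\RCAo$ does not suffice---this is precisely where the axiom $\textsf{IND}$ (induction for all $\L_\omega$-formulas) is invoked. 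I expect the main technical obstacle to be the bookkeeping around the sup-definition of $\rho$: verifying that it yields a well-typed object of type $1\to 1$ with strictly positive real values and a valid Cauchy code, uniformly in $x$. Once this is settled using the $\exists^3$-decidability of the threshold predicate and the obvious bound $\rho(x)\leq 2$, the rest of the argument proceeds as above and completes the equivalence $\HBU\asa\HBT$ over $\RCAo+\textsf{IND}+(\kappa_0^3)$.
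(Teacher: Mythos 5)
Your proposal is correct and follows essentially the same route as the paper's proof: the same $(\exists^{2})\vee\neg(\exists^{2})$ dichotomy with the $\neg(\exists^{2})$ branch borrowed from Corollary \ref{dorkeeee}, the promotion of $(\kappa_{0}^{3})$ to $\exists^{3}$ in order to manufacture a canonical cover by some $\Psi:I\di\R^{+}$ refining the $\psi$-cover, an application of $\HBU$ to that cover, and finally $\textsf{IND}$ to perform the finite simultaneous choice of the $y_{i}$ (the paper's formula \eqref{conplete}). The only substantive difference is how the refining modulus is obtained: the paper applies $\QFAC^{1,0}$ (part of $\RCAo$) to the $\exists^{3}$-decidable formula $(\exists y\in I)((x-2^{-n},x+2^{-n})\subseteq I_{y}^{\psi})$, whereas you compute an explicit supremum $\rho$ by binary search; both are fine. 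One small slip: since $\psi:I\di\R$ carries no boundedness assumption, your ``obvious bound $\rho(x)\leq 2$'' is unjustified and the supremum need not exist as a real --- truncate, e.g.\ replace the quantity inside the supremum by its minimum with $1$, which preserves both strict positivity and the inclusion $I_{x_{i}}^{\rho(x_{i})/2}\subseteq I_{y_{i}}^{\psi}$.
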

\begin{proof}
The reverse direction is immediate.  For the forward direction, in case $\neg(\exists^{2})$, we obtain $\HBU\di \WKL$ and proceed as in the proof of Corollary \ref{dorkeeee}. 
In case of $(\exists^{2})$, let $\psi$ be as in $\HBT$ and note that $(\forall x\in I)(\exists y\in I)(x\in I_{y}^{\psi})$ implies:
\be\label{centrifuge}\textstyle
(\forall x\in I)(\exists n\in \N)\underline{(\exists y\in I)((x-\frac{1}{2^{n}}, x+\frac{1}{2^{n}})\subseteq I_{y}^{\psi})}, 
\ee
where the underlined formula is decidable thanks to $(\exists^{3})\equiv [(\exists^{2}) + (\kappa_{0}^{3})]$.
Hence, applying $\QFAC^{1,0}$ to \eqref{centrifuge}, we obtain $\Psi:I\di \R^{+}$ such that $\cup_{x\in I}I_{x}^{\Psi}$ is a canonical cover of $I$.  
Applying $\HBU$, we obtain a finite sub-cover of $\cup_{x\in I}I_{x}^{\Psi}$, say generated by $x_{1}, \dots, x_{k}\in I $.  
By definition, we have $(\forall x\in I)(\exists y\in I)(I^{\Psi}_{x}\subseteq I_{y}^{\psi})$, and 
\be\label{conplete}
(\forall w^{1^{*}})(\exists v^{1^{*}})(\forall i<|w|)(I^{\Psi}_{w(i)}\subseteq I_{v(i)}^{\psi})
\ee
follows from $\textsf{IND}$ by induction on $|w|$.  Applying \eqref{conplete} for $w=\langle x_{1}, \dots, x_{k}\rangle$, we obtain a finite sub-cover for $\cup_{x\in I}I_{x}^{\psi}$. The law of excluded middle finishes the proof. 
\end{proof}
As to open questions, we do not know if the base theory proves $\HBT$ outright or not.  
Similarly, we do not know if $\RCAo+(\kappa_{0}^{3})$ proves $\WKL$ or not.  

\smallskip

In conclusion, we mention two important observations that stem from the above.

\smallskip

First of all, it is easy to see that the first two proofs go through for the Heine-Borel theorem for $[0,1]$ based on {any `reasonable' notion} of cover.
Indeed, as long as the formulas `$x\in U_{y}$' and `$[a,b]\subseteq U_{x}$' for the new notion of cover $\cup_{x\in I}U_{x}$ of $I$ are decidable in $\Z_{2}^{\Omega}$, the above proofs go through (assuming $(\kappa_{0}^{3})$).  
Since $\Z_{2}^{\Omega}$ can decide if $Y:\R\di \{0,1\}$ represents an open subset of $\R$ (using the textbook definition of open set), this notion of `reasonable' seems quite reasonable. 

\smallskip

Secondly, emulating the proof of Theorem \ref{fugu}, we observe that the above results go through in the base theory with $(\kappa_{0}^{3})+\textsf{IND}$ instead of $\QFAC^{1,1}$.  
These include Theorem~\ref{rathergen}, Corollary \ref{dorkeeee}, Corollary \ref{hoerke}, Theorem~\ref{paramaeremki}, and Corollary~\ref{XYW}. 
Thus, these results do not require the axiom of choice in the base theory.  

\subsubsection{The Lindel\"of lemma}
We show that the Lindel\"of lemma does not depend on the definition of cover, similar to the case of the Heine-Borel theorem.  
On one hand, since $[\LIN+\WKL]\asa \HBU$, one expects such results.  
On the other hand, as shown in \cite{dagsamV}*{\S5}, the strength of the Lindel\"of lemma is highly dependent on the exact\footnote{The countable sub-cover in the Lindel\"of lemma can be given by a sequence of reals generating the intervals (strong version), or just a sequence of intervals (weak version). 
The strong version implies $\QFAC^{0,1}$ and hence is unprovable in $\ZF$, while the weak version is provable in $\Z_{2}^{\Omega}$.  
} formulation, but this dependence is not problematic for our context.  
  
\smallskip  
  
We introduce the notion of cover used in \cite{dagsamV}*{\S5}, as follows.
We consider $\psi:I\di \R^{2}$ and covers $\cup_{x\in I}J_{x}^{\psi}$ in which the interval $J_{x}^{\psi}:=(\psi(x)(1), \psi(x)(2))$ is potentially empty but $(\forall x\in I)(\exists y\in I)(x\in J_{y}^{\psi})$. 
This notion of cover yields a `strong' version of the Lindel\"of lemma, as follows.  
\be\tag{$\LIL$}
(\forall \psi:\R\di \R^{2})\big[ \R\subseteq \cup_{x\in \R}J_{x}^{\psi}\di   (\exists f:\N\di \R)(\R\subseteq \cup_{n\in \N}J_{f(n)}^{\psi}) \big].
\ee
Similar to the proof of \cite{dagsamIII}*{Theorem 3.13}, one proves that $\HBT\asa [\WKL+\LIL]$ over $\RCAo+\QFAC^{1,1}$.
We first prove that the Lindel\"of lemma $\LIL$ is equivalent to $\LIN$ from \cite{dagsamIII}*{\S3}. 
We believe that $\LIN$ does not imply countable choice $\QFAC^{0,1}$. 
\begin{thm}\label{brokken}
The system $\RCA_{0}^{\omega}+\QFAC^{1,1}$ proves $\LIN\asa \LIL$.
\end{thm}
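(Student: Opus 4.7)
The plan is to mirror the strategy of Theorem \ref{ziedenauw}: the reverse direction is immediate by encoding a $\Psi$-cover as a $\psi$-cover, and the forward direction proceeds by the case split $(\exists^{2})\vee\neg(\exists^{2})$, which is available in $\RCAo$ via the law of excluded middle. For $\LIL\di\LIN$, given $\Psi\colon\R\di\R^{+}$, define $\psi\colon\R\di\R^{2}$ by $\psi(x):=(x-\Psi(x),\,x+\Psi(x))$, so that $J_{x}^{\psi}=I_{x}^{\Psi}$; the covering hypothesis of $\LIL$ is then free since every $x$ lies in $J_{x}^{\psi}$, and the countable sub-cover returned by $\LIL$ is exactly the $\Phi$ required by $\LIN$.

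For $\LIN\di\LIL$ assuming $(\exists^{2})$, let $\psi\colon\R\di\R^{2}$ satisfy $\R\subseteq\bigcup_{x\in\R}J_{x}^{\psi}$. The formula $x\in J_{y}^{\psi}$ is $\Sigma_{1}^{0}$, hence quantifier-free-equivalent using $\exists^{2}$. Apply $\QFAC^{1,1}$ to $(\forall x\in\R)(\exists y\in\R)(x\in J_{y}^{\psi})$ to extract a selector $\Xi\colon\R\di\R$ with $x\in J_{\Xi(x)}^{\psi}$ for every $x$, rendered extensional on the reals by postcomposing with the canonical binary representation functional coming from $\exists^{2}$. Set
\[
\Psi(x):=\min\bigl(x-\psi(\Xi(x))(1),\,\psi(\Xi(x))(2)-x\bigr),
\]
which is strictly positive precisely because $x\in J_{\Xi(x)}^{\psi}$ places $x$ in the interior of that interval, and a routine check gives $I_{x}^{\Psi}\subseteq J_{\Xi(x)}^{\psi}$. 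Applying $\LIN$ to $\Psi$ yields $\Phi\colon\N\di\R$ with $\R\subseteq\bigcup_{n}I_{\Phi(n)}^{\Psi}\subseteq\bigcup_{n}J_{\Xi(\Phi(n))}^{\psi}$, so $f(n):=\Xi(\Phi(n))$ witnesses $\LIL$ for $\psi$.

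In the remaining case $\neg(\exists^{2})$, all functions $\R\di\R^{2}$ are continuous by \cite{kohlenbach2}*{Prop.~3.7}. Let $e\colon\N\di\Q$ be any enumeration of the rationals. Given $x\in\R$, fix $y$ with $\psi(y)(1)<x<\psi(y)(2)$; continuity of $\psi$ at $y$ furnishes $\delta>0$ such that $|z-y|<\delta$ still forces $\psi(z)(1)<x<\psi(z)(2)$, and any rational $e(n)$ within $\delta$ of $y$ then satisfies $x\in J_{e(n)}^{\psi}$. Thus $e$ already witnesses $\LIL$ for $\psi$, without even invoking $\LIN$. The main delicate point in the whole argument is guaranteeing extensionality of the selector $\Xi$ in the $(\exists^{2})$-case, since the choice functional supplied by $\QFAC^{1,1}$ acts on codes rather than on real-number equivalence classes; this is the standard fix via $\exists^{2}$. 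A minor secondary check is that $\Psi(x)>0$ in the above definition, which holds precisely because membership $x\in J_{\Xi(x)}^{\psi}$ forces $x$ strictly between the two endpoints.
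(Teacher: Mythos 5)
Your proposal is correct and follows essentially the same route as the paper's proof: the reverse direction via the obvious encoding, a case split on $(\exists^{2})$, and in the main case the extraction of an extensional selector $\Xi$ via $\QFAC^{1,1}$ and $\exists^{2}$, the passage to the canonical cover $\Psi(x):=\min(|x-\psi(\Xi(x))(1)|,|x-\psi(\Xi(x))(2)|)$, and composition of the resulting countable sub-cover with $\Xi$; the $\neg(\exists^{2})$ case is likewise handled by continuity and the rationals. No substantive differences to report.
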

\begin{proof}
Similar to the proof of Theorem \ref{ziedenauw}: the reverse direction is immediate, while in case of $\neg(\exists^{2})$ each principle is provable in $\RCAo$ using the sub-cover consisting of all rationals.
In case of $(\exists^{2})$, let $\psi$ be as in $\LIL$ and consider $(\forall x\in \R)(\exists y\in \R)(x\in J_{y}^{\psi})$.  Since the innermost formula is $\Sigma_{1}^{0}$, we may 
apply $\QFAC^{1,1}$ to obtain $\Xi$ such that $(\forall x\in \R)(x\in J_{\Xi(x)}^{\psi})$.  Since $\exists^{2}$ provides a functional that converts real numbers to a binary representation, we may assume that $\Xi$ is extensional on the reals.   
Now define $\Psi:I\di \R^{+}$ by $\Psi(x):=\min\big(|x-\psi(\Xi(x))(1)|, |x-\psi(\Xi(x))(2)|\big)$, and note that $I^{\Psi}_{x}\subseteq J_{\Xi(x)}^{\psi}$.  
Applying $\LIN$, we obtain a countable sub-cover of $\cup_{x\in I}I_{x}^{\Psi}$, say generated by $\Phi^{0\di1} $, and $\cup_{i\in \N}I_{\Xi(\Phi(i))}^{\psi}$ is a countable sub-cover of $\cup_{x\in I}I_{x}^{\psi}$.
\end{proof}
For completeness, we also mention the following corollary.  
\begin{cor}
$\RCAo+\QFAC^{1,1}$ proves $ \LIL\asa [\dim(\R)\leq 1]\asa \textup{$\R$ is paracompact}$.  
\end{cor}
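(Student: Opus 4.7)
The plan is to establish both equivalences by adapting the arguments of Theorems \ref{rathergen} and \ref{paramaeremki} together with the reverse direction of Corollary \ref{dorkeeee}, replacing $\HBT$ by $\LIL$ and replacing the unidirectional iteration from the left endpoint of $[0,1]$ by a bidirectional iteration from the origin of $\R$. Since $\R$ is not compact, $\WKL$ need not appear on either side of the equivalences, which is consistent with the already-noted $\HBT\asa[\WKL+\LIL]$.

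For $\dim(\R)\leq 1\Rightarrow\LIL$, I would proceed as in Theorem \ref{rathergen}: given $\psi\colon\R\to\R^{2}$ with $\bigcup_{x\in\R}J^{\psi}_{x}=\R$, take an order-$1$ refinement $\phi$, apply $\QFAC^{1,1}$ to produce $\Xi$ with $x\in J^{\phi}_{\Xi(x)}$ for all $x\in\R$, and define $\zeta^{+}(0):=\Xi(0)+\phi(\Xi(0))(2)$, $\zeta^{+}(n+1):=\Xi(\zeta^{+}(n))+\phi(\Xi(\zeta^{+}(n)))(2)$, with a symmetric $\zeta^{-}$ going leftwards via the left endpoint. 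If either sequence is bounded, its supremum $x_{0}\in\R$ (found via monotone convergence in $\ACAo$, once we are in the $(\exists^{2})$-case) lies in $J^{\phi}_{\Xi(x_{0})}$, which eventually also contains $\zeta^{\pm}(n)$ for large $n$; this produces a point of order $3$ in the intersection of consecutive $\zeta$-intervals with $J^{\phi}_{\Xi(x_{0})}$, contradicting the order-$1$ refinement property. Hence both sequences are unbounded and $\{J^{\phi}_{\Xi(\zeta^{\pm}(n))}\}_{n\in\N}$ is a countable sub-cover of the $\phi$-cover, which lifts via another $\QFAC^{1,1}$ application to a countable sub-cover of the $\psi$-cover. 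The proof of $\R\textup{ paracompact}\Rightarrow\LIL$ is the strictly analogous adaptation of Theorem \ref{paramaeremki}: the contradiction becomes that every neighbourhood of $x_{0}$ meets infinitely many refinement intervals, violating local finiteness.

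For the reverse directions $\LIL\Rightarrow\dim(\R)\leq 1$ and $\LIL\Rightarrow\R\textup{ paracompact}$, I would mirror the reverse direction of Corollary \ref{dorkeeee} with a case split on $(\exists^{2})\vee\neg(\exists^{2})$. Under $(\exists^{2})$, apply $\LIL$ to obtain a countable sub-cover $\{J_{n}\}_{n\in\N}$ and use decidability of real (in)equalities to thin it to a family tiling $\R$ with only consecutive overlaps; such a family is simultaneously of order at most $1$ and locally finite. Under $\neg(\exists^{2})$, continuity of all $\psi$ lets the rational-parameter construction from the $\neg(\exists^{2})$-case of Corollary \ref{dorkeeee} supply a refinement with both properties directly. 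The main obstacle is the $(\exists^{2})$-case thinning construction, namely verifying that it goes through in $\RCAo+\QFAC^{1,1}$ without invoking $\WKL$; here the non-compactness of $\R$ is actually helpful, since no finite sub-cover extraction is ever required and the countable structure provided by $\LIL$ can be iterated linearly along $\R$.
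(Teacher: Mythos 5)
Your argument is essentially correct, but it takes a genuinely different route from the paper. The paper's proof is a three-line derivation from earlier results: it only treats the equivalence $\LIL\asa[\textup{$\R$ is paracompact}]$, reduces $\LIL$ to $\LIN$ via Theorem \ref{brokken}, and in the $(\exists^{2})$-case gets the forward direction by restricting paracompactness of $\R$ to $[0,1]$, invoking Theorem \ref{paramaeremki} to obtain $\HBT$, and then using the splitting $\HBT\asa[\WKL+\LIL]$; the reverse direction is the same refinement-of-a-countable-subcover observation you make. You instead re-run the iteration arguments of Theorems \ref{rathergen} and \ref{paramaeremki} directly on $\R$ with a two-sided sequence $\zeta^{\pm}$. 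What your route buys is self-containedness: it avoids the (unstated in the paper) step that paracompactness of $\R$ yields paracompactness of $[0,1]$, it never passes through compactness of the unit interval, and it handles $\dim(\R)\le 1$ explicitly rather than leaving it as an analogue. What the paper's route buys is brevity and reuse of the already-established equivalences. Two small repairs to your write-up: the recursion should send $\zeta^{+}(n)$ to the right endpoint $\phi(\Xi(\zeta^{+}(n)))(2)$ of the covering interval $J^{\phi}_{\Xi(\zeta^{+}(n))}$ (not to $\Xi(\zeta^{+}(n))$ \emph{plus} that endpoint, which need not lie in the closure of the interval and could leave gaps or run backwards); and you should state the $\neg(\exists^{2})$-case for the \emph{forward} implications as well, where it is trivial because $\LIL$ is then provable outright (all functions being continuous, the rationals generate a countable sub-cover), exactly as the paper notes for $\LIN$.
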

\begin{proof}
We only prove the equivalence between $\LIL$ and the paracompactness of $\R$. 
By Theorem \ref{brokken}, it suffices to prove $\LIN$.  
In case $\neg(\exists^{2})$, the latter is provable outright, as all $\R\di \R$-functions are continuous, and then the rationals provide a countable sub-cover for any open cover as in $\LIN$.
Similarly, paracompactness reduces to countable paracompactness, and the latter is provable in $\RCAo$ by \cite{simpson2}*{II.7.2}.
In case of $(\exists^{2})$, the paracompactness of $\R$ (and hence $I$ with minor modification) implies $\HBT$ by Theorem \ref{paramaeremki}, and the aforementioned result $\HBT\asa [\WKL+\LIL]$ over $\RCAo+\QFAC^{1,1}$ finishes the forward direction.  The reverse direction is straightforward as $\exists^{2}$ decides inequalities between reals, and hence can easily refine the countable sub-cover provided by $\LIN$.  
\end{proof}
As it turns out, we can avoid the use of $\QFAC^{1,1}$ as follows
\begin{thm}\label{kokken}
The system $\RCA_{0}^{\omega}+(\kappa_{0}^{3})+\textsf{\textup{IND}}$ proves $[\LIN+\QFAC^{0,1}]\asa \LIL$.
\end{thm}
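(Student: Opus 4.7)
The plan is to mimic the strategy of Theorem \ref{fugu}, replacing $\HBU$ with $\LIN$ and replacing the induction on finite sequences by $\QFAC^{0,1}$ at the final lifting step. The reverse direction is essentially immediate: $\LIN$ is the instance of $\LIL$ obtained by taking $\psi(x):=(x-\Psi(x),x+\Psi(x))$, while the derivation $\LIL\di\QFAC^{0,1}$ proceeds as in the corresponding result for the strong form of the Lindel\"of lemma from \cite{dagsamV}*{\S5}.

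For the forward direction, fix $\psi:\R\di \R^{2}$ with $(\forall x\in \R)(\exists y\in \R)(x\in J_{y}^{\psi})$ and split on $(\exists^{2})\vee \neg(\exists^{2})$. In the branch $\neg(\exists^{2})$, every $\R\di \R^{2}$-functional is continuous, so whenever $x\in J_{y}^{\psi}$ one may perturb $y$ to a nearby rational while preserving the containment; enumerating $\Q$ then yields a countable sub-cover generating $\LIL$ already in $\RCAo$, as in the analogous branch of Theorem \ref{brokken}.

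In the branch $(\exists^{2})$, invoke the equivalence $[(\exists^{2})+(\kappa_{0}^{3})]\asa (\exists^{3})$ recalled in Section \ref{saxioms} to render decidable the matrix of
\[
(\forall x\in \R)(\exists n\in \N)\underline{(\exists y\in \R)\bigl((x-\tfrac{1}{2^{n}},x+\tfrac{1}{2^{n}})\subseteq J_{y}^{\psi}\bigr)},
\]
which is an immediate weakening of the cover hypothesis. Apply $\QFAC^{1,0}$ (part of $\RCAo$) to extract $\Psi:\R\di \R^{+}$ such that $(\forall x\in \R)(\exists y\in \R)(I_{x}^{\Psi}\subseteq J_{y}^{\psi})$. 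Feeding $\Psi$ to $\LIN$ yields $\Phi:\N\di \R$ with $\R=\cup_{n\in \N}I_{\Phi(n)}^{\Psi}$. The specialisation $(\forall n\in \N)(\exists y\in \R)(I_{\Phi(n)}^{\Psi}\subseteq J_{y}^{\psi})$ has decidable matrix (again via $(\exists^{3})$), so $\QFAC^{0,1}$ supplies $\Phi':\N\di \R$ with $I_{\Phi(n)}^{\Psi}\subseteq J_{\Phi'(n)}^{\psi}$ for all $n$; then $\cup_{n\in \N}J_{\Phi'(n)}^{\psi}$ is the countable sub-cover witnessing $\LIL$.

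The main obstacle is the $\neg(\exists^{2})$ branch: one must confirm that continuity of $\psi$ together with only $\QFAC^{1,0}$ from $\RCAo$ suffices to organise the rational witnesses into a function $\N\di \R$ (as required by the formulation of $\LIL$), rather than merely an unindexed set of rationals; the secondary subtlety is checking that the decidability provided by $(\exists^{3})$ is strong enough to bring each application of $\QFAC^{1,0}$ and $\QFAC^{0,1}$ inside the quantifier-free fragment for which those axioms are stated.
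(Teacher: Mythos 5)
Your proof is correct and takes essentially the same route as the paper's: the same case split on $(\exists^{2})\vee\neg(\exists^{2})$, the same use of $(\kappa_{0}^{3})$ (via $\exists^{3}$) to render the shrunken-cover formula decidable so that $\QFAC^{1,0}$ yields $\Psi$, an application of $\LIN$ to $\Psi$, a lift back to $\psi$ via $\QFAC^{0,1}$, and the same reverse direction via \cite{dagsamV}. The only (harmless) deviation is that the paper routes the final step through $\IND$, first establishing $(\forall n)(\exists v^{1^{*}})(\forall i\leq n)(I^{\Psi}_{\Phi(i)}\subseteq J_{v(i)}^{\psi})$ before applying $\QFAC^{0,1}$, whereas you apply $\QFAC^{0,1}$ directly to $(\forall n)(\exists y)(I^{\Psi}_{\Phi(n)}\subseteq J_{y}^{\psi})$; this is fine since that matrix is already decidable (indeed by $\exists^{2}$ alone, as it is an endpoint comparison), and your two flagged worries are resolved respectively by continuity of $\psi$ under $\neg(\exists^{2})$ and by the convention that quantifier-free formulas may contain higher-type parameters such as $\exists^{3}$.
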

\begin{proof}
For the forward implication, in case $\neg(\exists^{2})$, the rationals provides a countable sub-cover, as all functions on the reals are continuous by \cite{kohlenbach2}*{Prop.\ 3.7}.  In case of $(\exists^{2})$, fix $\psi:\R\di \R^{2}$ as in $\LIL$ and formulate a version of \eqref{centrifuge} as follows:
\be\label{centrifuge22}\textstyle
(\forall x\in \R)(\exists n\in \N)\underline{(\exists y\in \R)\big((x-\frac{1}{2^{n}}, x+\frac{1}{2^{n}})\subseteq J_{y}^{\psi}\big)}, 
\ee
The underlined formula is again decidable thanks to $\exists^{3}$, and $\QFAC^{1,0}$ yields a functional $\Psi:\R\di \R^{+}$ such that the canonical cover $\cup_{x\in \R}I_{x}^{\Psi}$ also covers $\R$. 
Applying $\LIN$, we obtain a functional $\Phi^{0\di 1}$ and the following version of \eqref{conplete}:
\be\label{conplete2}
(\forall n\in \N)(\exists v^{1^{*}})(\forall i\leq n)(I^{\Psi}_{\Phi(i)}\subseteq I_{v(i)}^{\psi}).
\ee
Applying $\QFAC^{0,1}$ to \eqref{conplete2}, we obtain $\LIL$, and this direction is done.  

\smallskip

For the reverse implication, note that $\LIL\di \QFAC^{0,1}$ follows from \cite{dagsamV}*{Theorem~5.3}, \emph{because} the base theory $\RCAo+(\kappa_{0}^{3})$ allows us to generalise the class of covers, as discussed in \cite{dagsamV}*{Remark 5.9}.  With that, we are done. 
\end{proof}
We believe that the previous 
splitting\footnote{In $\LIN$, any $x\in \R$ is covered by $I_{x}^{\Psi}$, while in $\LIL$ any $x\in \R$ is covered by $J_{y}^{\psi}$ \emph{for some $y\in \R$}.  
In the former case, we `know' which interval covers the point, while in the latter case, we only know \emph{that it exists}.    
We believe this (seemingly minor) difference determines whether one can obtain $\QFAC^{0,1}$ (like in the case of $\LIL$) or not (in the case of $\LIN$, we conjecture).  
Indeed, applying $\QFAC^{1,0}$ to the conclusion of $\LIL$, we obtain a functional that provides for any $x\in \R$, an interval $J_{y}^{\psi}$ covering $x$, i.e.\ $\LIL$ clearly exhibits `axiom of choice' behaviour, while $\LIN$ does not.} 
is proper.  The following corollary to the theorem is proved in the same way.  
\begin{cor}\label{haher}
The system $\RCAo+(\kappa_{0}^{3})+\textsf{\textup{IND}}$ proves 
\be
\big[[\dim(\R)\leq 1]+\QFAC^{0,1}\big]\asa \big[[\textup{$\R$ is paracompact}]+\QFAC^{0,1}\big] \asa \LIL. 
\ee
\end{cor}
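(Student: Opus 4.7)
The plan is to mimic the method of Theorem~\ref{kokken} while incorporating the strategies from the $\QFAC^{1,1}$-based equivalences (Corollaries~\ref{dorkeeee}, \ref{hoerke}, \ref{XYW}). The hard implications are $[\dim(\R)\leq 1] + \QFAC^{0,1}\Rightarrow \LIL$ and $[\R\text{ is paracompact}]+\QFAC^{0,1}\Rightarrow \LIL$; for both I would case-split on $(\exists^{2})\vee\neg(\exists^{2})$. In the case $\neg(\exists^{2})$, all real-valued functions are continuous by \cite{kohlenbach2}*{Prop.~3.7}, so a rational enumeration yields a countable subcover for free. In the case $(\exists^{2})$, we have $(\exists^{3})\equiv(\exists^{2})+(\kappa_{0}^{3})$, making the underlined formula in \eqref{centrifuge22} decidable; applying $\QFAC^{1,0}$ produces $\Psi:\R\di\R^{+}$ with $I_{x}^{\Psi}\subseteq J_{y}^{\psi}$ for some $y$ depending on $x$, i.e.\ a canonical cover refining the $\psi$-cover.

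Next, I would feed this canonical cover $\cup_{x\in\R}I_{x}^{\Psi}$ into the hypothesis ($\dim(\R)\leq 1$, respectively paracompactness of $\R$) to obtain an order-$\leq 2$ or locally finite refinement $\phi:\R\di\R$. A bidirectional chain construction starting at $0$, analogous to the one in the proof of Theorem~\ref{rathergen}, but run in both directions and without terminating in finitely many steps, produces a countable sequence of $\phi$-intervals covering $\R$: any accumulation point of the chain would yield either three $\phi$-intervals with a common point (contradicting order $\leq 2$) or a neighbourhood meeting infinitely many chain elements (contradicting local finiteness). To lift the chain of $\phi$-intervals back to $\psi$-intervals, I would first establish the finite lifting statement \eqref{conplete2} (adapted to $\psi$) by $\textsf{IND}$, and then apply $\QFAC^{0,1}$ to extract a single sequence, yielding the countable subcover of $\cup_{x\in\R}J_{x}^{\psi}$ required by $\LIL$.

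For the forward implications from $\LIL$, the axiom of countable choice $\QFAC^{0,1}$ comes for free via \cite{dagsamV}*{Theorem~5.3}, as cited in the proof of Theorem~\ref{kokken}. For $[\dim(\R)\leq 1]$ and paracompactness I would once more split on $(\exists^{2})$: in the negative case both properties reduce to their countable counterparts, which are provable in $\RCAo$; in the positive case, $\LIL$ applied to a given $\psi$-cover yields a countable subcover which one trims using $(\exists^{2})$ (to decide equality of reals) into an order-$\leq 2$ refinement, respectively a locally finite refinement, exactly as in the forward direction of Corollary~\ref{dorkeeee} and Theorem~\ref{paramaeremki}.

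The hard part will be the bidirectional countable chain construction on the unbounded space $\R$: unlike the $[0,1]$-setting of Theorem~\ref{rathergen} where $\textsf{IND}$ terminates the chain in finitely many steps, here the chain is genuinely infinite and one must show both that it exhausts $\R$ in both directions and that it can be simultaneously lifted through $\Psi$ back to the original $\psi$-cover with $\QFAC^{0,1}$ supplying only \emph{one} countable choice. Once this step is secured, the remainder is a routine bookkeeping exercise following the templates already set by Theorems~\ref{rathergen}, \ref{paramaeremki}, and \ref{kokken}.
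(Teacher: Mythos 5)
Your proposal is correct and is essentially the paper's own argument: the paper offers no separate proof of this corollary, saying only that it is ``proved in the same way'' as Theorem~\ref{kokken}, and your plan --- the case split on $(\exists^{2})$, the $(\kappa_{0}^{3})$-plus-$\QFAC^{1,0}$ canonicalisation of the cover, the $\zeta$-chain argument of Theorems~\ref{rathergen} and \ref{paramaeremki} adapted to $\R$ exactly as in Theorem~\ref{kokken327}, the $\IND$-plus-$\QFAC^{0,1}$ lifting back to the original cover, and $\LIL\di\QFAC^{0,1}$ via \cite{dagsamV}*{Theorem 5.3} --- is precisely the intended expansion of that remark. The only bookkeeping point you leave implicit (and which the paper also leaves implicit) is that chaining through the non-canonical refinement $\phi$ requires one further application of the same $(\kappa_{0}^{3})$-plus-$\QFAC^{1,0}$ trick to $\phi$ itself, since no choice function over the reals is otherwise available.
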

In conclusion, it is easy to see that the proofs of this section go through for the Lindel\"of lemma for $\R$ based on {any `reasonable' notion} of cover.
Indeed, as long as the formulas `$x\in U_{y}$' and `$[a,b]\subseteq U_{x}$' for the new notion of cover $\cup_{x\in \R}U_{x}$ of $\R$ are decidable in $\Z_{2}^{\Omega}$, the above proofs go through (assuming $(\kappa_{0}^{3})$).  
Since $\Z_{2}^{\Omega}$ can decide if $Y:\R\di \{0,1\}$ represents an open subset of $\R$ (using the textbook definition of open set), this notion of `reasonable' again seems quite reasonable. 

\subsubsection{Partitions of unity}\label{unite}
The notion of \emph{partition of unity} was introduced in 1937 by Dieudonn\'e in \cite{nogeengodsgeschenkje} and this notion is equivalent to paracompactness in a rather general setting by \cite{bengelkoning}*{Theorem 5.1.9}.
We study partitions of unity in this section motivated as follows: on one hand, Simpson proves the existence of {partitions of unity} for complete separable spaces in the proof of \cite{simpson2}*{II.7.2}, i.e.\ this notion has been studied in RM.  

\smallskip

The definition of partition of unity is as follows in Munkres \cite{munkies}*{p.\ 258}
\bdefi
Let $\{U_{\alpha}\}_{\alpha\in J}$ be an indexed open covering of $X$. An indexed family of
continuous functions $\phi_{\alpha}: X \di [0, 1]$ is said to be a partition of unity on $X$, dominated\footnote{Munkres uses `dominated by' in \cite{munkies} instead of Engelking's `subordinate to' in \cite{bengelkoning}.} by $\{U_{\alpha}\}$, if:
\begin{enumerate}
\item $\textsf{support}(\phi_{\alpha})\subset U_{\alpha}$ for each $\alpha\in J$.
\item The indexed family $\{\textsf{support}(\phi_{\alpha})\}_{\alpha\in J}$ is locally finite.
\item $\sum_{\alpha\in J}\phi_{\alpha}(x)=1$ for each $x\in X$.
\end{enumerate}
where $\textsf{support}(f)$ is the closure of the open set $\{x\in X:f(x)\ne 0\}$.
\edefi\noindent
Note that the second item implies that the sum in the third one makes sense.  
With this definition in place, $\PUNI(I)$ is the statement that for any cover generated by $\psi:I\di \R$,  there is a partition of unity $\phi:I\times I\di \R$ of $I$ dominated by $\cup_{x\in I}I_{x}^{\psi}$.
\begin{thm}\label{kokken3}
The system $\RCA_{0}^{\omega}+(\kappa_{0}^{3})+\IND$ proves $[\WKL+\PUNI(I)]\asa \HBT$.
\end{thm}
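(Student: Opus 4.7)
The plan is to mirror the proof of Corollary \ref{XYW}, with $\PUNI(I)$ in the role of the paracompactness hypothesis and the $(\kappa_{0}^{3})+\IND$ devices of Theorem \ref{fugu} substituting for $\QFAC^{1,1}$. The reverse implication $\HBT \to \WKL + \PUNI(I)$ should be routine: $\HBT$ immediately yields $\HBU$ and hence $\WKL$, and from a finite sub-cover of $\cup_{x\in I}I_{x}^{\psi}$ delivered by $\HBT$, a standard bump-function construction (available in $\RCAo$) builds a partition of unity $\phi:I\times I \to \R$ dominated by the cover.

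For the forward direction I would case-split on $(\exists^{2})\vee\neg(\exists^{2})$. In the case $\neg(\exists^{2})$, every $\R\to\R$ functional is continuous by \cite{kohlenbach2}*{Prop.\ 3.7}, and the rational-sub-cover argument from the proof of Corollary \ref{dorkeeee} derives $\HBT$ from $\WKL$. In the case $(\exists^{2})$, Kohlenbach's equivalence yields $(\exists^{3})$ from $(\kappa_{0}^{3})$. Given $\psi:I\to\R$ whose intervals cover $[0,1]$, I would extract $\phi:I\times I\to\R$ from $\PUNI(I)$, noting that the supports of $\phi_{y}:=\phi(y,\cdot)$ are locally finite and satisfy $\overline{\{\phi_{y}>0\}}\subseteq I_{y}^{\psi}$. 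Using $(\exists^{3})$ to decide the predicate $(\exists y\in I)\bigl((x-2^{-n},x+2^{-n})\subseteq I_{y}^{\psi}\bigr)$ and then applying $\QFAC^{1,0}$, exactly as in the (centrifuge) step of Theorem \ref{fugu}, I obtain a canonical-cover radius $\Psi:I\to\R^{+}$ with each $I_{x}^{\Psi}$ inscribed in some $I_{y}^{\psi}$.

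I then define $\zeta^{0\di 1}$ by $\zeta(0):=\Psi(0)$ and $\zeta(n+1):=\zeta(n)+\Psi(\zeta(n))$, and consider the alternative $(\exists x\in I)(\forall n\in \N)(\zeta(n)<_{\R}x)$. If it fails, some $\zeta(n_{0})\geq 1$ and the finite family $\{I_{\zeta(i)}^{\Psi}\}_{i\leq n_{0}}$ sub-covers $[0,1]$; applying $\IND$ to the $(\exists^{3})$-decidable predicate $(\exists y\in I)(I_{\zeta(i)}^{\Psi}\subseteq I_{y}^{\psi})$ --- verbatim as in Theorem \ref{fugu} --- would produce $v\in I^{*}$ with $I_{\zeta(i)}^{\Psi}\subseteq I_{v(i)}^{\psi}$, hence a finite sub-cover of $\cup_{x\in I}I_{x}^{\psi}$. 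If it holds, I would let $x_{0}$ be the least such upper bound, extracted via $(\exists^{2})$ and monotone convergence (provable in $\ACAo$). Local finiteness of the supports of $\phi_{y}$ at $x_{0}$, supplied by $\PUNI(I)$, furnishes a neighbourhood of $x_{0}$ meeting only finitely many supports $\overline{\{\phi_{y_{1}}>0\}},\dots,\overline{\{\phi_{y_{k}}>0\}}$, one of which covers $x_{0}$, say $x_{0}\in I_{y_{j}}^{\psi}$; this forces the distance from $\zeta(n)$ to $\partial I_{y_{j}}^{\psi}$ to be bounded below by a positive constant $c$ for all sufficiently large $n$, whence $\Psi(\zeta(n))\geq c/2$ eventually, contradicting $\zeta(n)\to x_{0}$.

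The main obstacle will be the witness-binding step at the end of the successful branch: bridging from the $\Psi$-sub-cover back to a genuine $\psi$-sub-cover without invoking $\QFAC^{1,1}$. This is exactly the function of $\IND$ applied to the $(\exists^{3})$-decidable inclusion condition, isolated as a technique in the proof of Theorem \ref{fugu}; the remainder of the argument closely parallels the paracompactness proof of Theorem \ref{paramaeremki}, with $\PUNI(I)$'s local finiteness of supports playing the role of the locally finite refinement used there.
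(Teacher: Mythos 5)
Your overall architecture --- the case split on $(\exists^{2})\vee\neg(\exists^{2})$, the routine reverse direction, the `centrifuge' formula decided by $\exists^{3}$ plus $\QFAC^{1,0}$ to produce a canonical cover $\Psi$, the $\zeta$-iteration, and the $\IND$-based witness-binding at the end --- matches the paper's proof. But the paper differs in one essential respect: it inscribes the canonical intervals $I_{x}^{\Psi}$ into the \emph{supports} $V_{y}=\textsf{support}(\phi(y,\cdot))$ rather than into the $I_{y}^{\psi}$ (see \eqref{centrifuge2555}), and in the bounded branch the contradiction is obtained exactly as in Theorem \ref{paramaeremki}: a neighbourhood of the least upper bound $x_{0}$ meets infinitely many of the $V$'s, violating the \emph{local finiteness} supplied by $\PUNI(I)$. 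Your own appeal to local finiteness is idle: the only conclusion you extract from it --- that $x_{0}$ lies in some open $I_{y_{j}}^{\psi}$ --- already follows from the cover hypothesis alone, so your argument never actually uses the partition of unity in the $(\exists^{2})$ case.

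The genuine gap is the step ``whence $\Psi(\zeta(n))\geq c/2$ eventually''. The functional $\Psi$ delivered by $\QFAC^{1,0}$ only guarantees that each $I_{x}^{\Psi}$ fits inside \emph{some} $I_{y}^{\psi}$; nothing forces the chosen radius to be large where a large radius would fit, so the choice functional may return radii shrinking to $0$ along $\zeta(n)\to x_{0}$, in which case $\zeta$ converges to $x_{0}$ and no contradiction arises. The repair is to replace the $\QFAC^{1,0}$ witness by the \emph{least} $n$ satisfying the ($\exists^{3}$-decidable) matrix, which $\exists^{2}$ supports via bounded search; then $\Psi(\zeta(n))\geq c/2$ does hold near $x_{0}$ and your creeping-along argument closes. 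Note, however, that the repaired argument is essentially the standard proof of $\HBU$ (hence, via Theorem \ref{fugu}, of $\HBT$) inside $\Z_{2}^{\Omega}$, with $\PUNI(I)$ contributing nothing; if you want the hypothesis to do the work, follow the paper and derive the contradiction from local finiteness of the supports at $x_{0}$.
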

\begin{proof}
In case of $\neg(\exists^{2})$, the equivalence is easy: all $\R\di \R$-functions are continuous and $\PUNI(I)$ is provable as in the proof of \cite{simpson2}*{II.7.2}, while $\HBT$ follows from $\WKL$ as in the proof of Corollary \ref{dorkeeee}. 
In case of $(\exists^{2})$, the reverse implication is also straightforward: the finite sub-cover provided by $\HBT$ is readily refined, and the existence of a partition 
of unity for a finite cover follows from \cite{simpson2}*{II.7.1}. 

\smallskip

Finally, for the forward direction assuming $(\exists^{2})$, let $\psi:I\di \R$ be as in $\HBT$ and obtain $\phi:I^{2}\di \R$ as in $\PUNI(I)$, i.e.\ for $V_{x}:=\textsf{support}(\phi(x, \cdot))$, 
the open cover $\cup_{x\in I }V_{x}$ of $I$ is locally finite and satisfies $V_{x}\subset I_{x}^{\psi}$.  Now consider:
\be\label{centrifuge2555}\textstyle
(\forall x\in I)(\exists n\in \N)\underline{(\exists y\in I)\big((x-\frac{1}{2^{n}}, x+\frac{1}{2^{n}})\subseteq V_{y}\big)}, 
\ee
Applying $\QFAC^{1,0}$ to \eqref{centrifuge2555}, since $\exists^{3}$ is given, we obtain $\Psi:I\di \R^{+}$ such that $\cup_{x\in I}I_{x}^{\Psi}$ covers $I$. 
Now repeat the proof of Theorem \ref{paramaeremki} for $\Psi$ in place of $\phi$, which  yields $y_{1},\dots y_{k}\in I$ such  $\cup_{i\leq k }I_{x}^{\Psi}$ is a finite sub-cover of $I$.  
Note that in the previous `repeated proof', we do not need the function $\Xi$ (from the proof of Theorem \ref{rathergen}), as $I_{x}^{\Psi}$ covers $x$ for any $x\in I$.  
The aforementioned finite sub-cover of $\cup_{x\in I}I_{x}^{\Psi}$ now yields a finite sub-cover of $\cup_{x\in I}I_{x}^{\psi}$ using $\IND$ in the same way as for Theorem~\ref{fugu}. 
\end{proof}
\begin{cor}\label{kokken322}
The system $\RCA_{0}^{\omega}+\IND+(\kappa_{0}^{3})+\PUNI(I)$ proves $\HBU\asa \HBT$.
\end{cor}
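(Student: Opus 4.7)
The plan is to derive Corollary \ref{kokken322} as an immediate consequence of Theorem \ref{kokken3}, since the base theory of the corollary differs from that of the theorem only by the inclusion of $\PUNI(I)$, which is precisely one of the hypotheses appearing in the equivalence proved there.

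For the reverse direction $\HBT \di \HBU$, the argument is trivial: any $\Psi : I \di \R^+$ is also a $\psi : I \di \R$, and the associated canonical cover $\cup_{x\in I} I_x^\psi$ covers $I$ since $\Psi(x) > 0$ at each $x$; hence the finite sub-cover guaranteed by $\HBT$ is exactly the one demanded by $\HBU$.

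For the forward direction, the plan is: from $\HBU$ first derive $\WKL$, then combine with the ambient $\PUNI(I)$ and invoke Theorem \ref{kokken3} to obtain $\HBT$. The only non-bookkeeping step is verifying $\HBU \di \WKL$ in the weak base theory $\RCAo$. For this I would use the equivalence $\HBU \asa \HBU_{\c}$ recalled in Section \ref{saxioms}: given an infinite binary tree $T$ with no infinite path, one defines $G^2$ on $C$ by setting $G(f)$ to be the least $n$ with $\overline{f}(n) \notin T$, which exists by hypothesis. The finite sub-cover $\bigcup_{i\leq k}[\overline{f_i} G(f_i)]$ of $C$ furnished by $\HBU_{\c}$ then forces $T$ to have no nodes of length exceeding $\max_i G(f_i)$, contradicting the assumed infinitude of $T$.

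I expect no serious obstacle: the whole argument is a short chain of routine citations. The only point requiring any care is that $\HBU \di \WKL$ holds without the axiom of choice, but this is essentially classical and already implicit in the discussion of $\HBU_{\c}$ in Section \ref{saxioms}; all substantive work has already been absorbed into Theorem \ref{kokken3}.
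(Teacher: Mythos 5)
Your proposal is correct and follows exactly the route the paper intends: the corollary is immediate from Theorem \ref{kokken3} once one notes that $\HBT\di\HBU$ trivially (a $\Psi:I\di\R^{+}$ is a special case of a $\psi:I\di\R$) and that $\HBU\di\WKL$ over $\RCAo$ (a fact the paper already invokes in the proof of Corollary \ref{dorkeeee}), so that $\HBU$ together with the ambient $\PUNI(I)$ yields $\HBT$. Your explicit verification of $\HBU\di\WKL$ via $\HBU_{\c}$ is the standard argument and adds detail the paper leaves implicit.
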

Note that previous base theory in the corollary (and hence the theorem) is conservative over Peano arithmetic by \cite{kohlenbach2}*{Prop.\ 3.12} and the proof of \cite{simpson2}*{II.7.2}.
%
%

\smallskip

Next, we obtain a theorem that brings together a number of different strands from this paper, including \emph{effective paracompactness}, first discussed at the end of Section \ref{diemensie2}.  
In the context of $\RCAo$, we say that `$\phi:\R\di \R$ is an \emph{effective} refinement of $\psi:\R\di \R$' if $(\exists \xi:\R\di\R)(\forall x\in \R)(I_{x}^{\phi}\subseteq I_{\xi(x)}^{\psi})$.  
Effective paracompactness expresses the existence of an effective refinement for any open cover.  Moreover, $\PUNI(\R)$ is the statement that for any cover generated by $\psi:\R\di \R^{2}$,  there is a partition of unity $\phi:\R^{2}\di \R^{2}$ dominated by $\cup_{x\in I}J_{x}^{\psi}$.
 
\begin{thm}\label{kokken327}
The system $\RCA_{0}^{\omega}+(\kappa_{0}^{3})+\IND$ proves the following 
\[
[\PUNI(\R)+\QFAC^{0,1}]\asa \LIL\asa [\textup{$\R$ is effectively paracompact}+\QFAC^{0,1}].
\]
\end{thm}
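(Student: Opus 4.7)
The plan is to close the cycle of implications
$[\PUNI(\R)+\QFAC^{0,1}] \to \LIL \to [\text{$\R$ eff.\ paracompact}+\QFAC^{0,1}] \to [\PUNI(\R)+\QFAC^{0,1}]$,
following the template of Theorems~\ref{kokken} and \ref{kokken3}. Throughout, I split on $(\exists^2) \vee \neg(\exists^2)$: when $\neg(\exists^2)$ holds, every $F:\R\di\R$ is continuous by \cite{kohlenbach2}*{Prop.\ 3.7}, and all three statements reduce to their second-order/countable counterparts provable in $\RCAo$ via the proof of \cite{simpson2}*{II.7.2}, using the density of $\Q$ to obtain countable sub-covers. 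Hence I focus on the case $(\exists^2)$, in which $\exists^3 \equiv (\exists^2) + (\kappa_0^3)$ and $\WKL$ are both available.

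For the first implication, I reduce to $\LIN + \QFAC^{0,1}$ and invoke Theorem~\ref{kokken}. Given $\Psi:\R\di\R^+$ as in $\LIN$, apply $\PUNI(\R)$ to the induced cover to obtain a partition of unity $\phi:\R^2\di\R$ with locally finite supports $V_x \subseteq I_x^\Psi$ and pointwise sum $1$. For each $q\in\Q$ and $n\in\N$, the set $\{x\in\R:\phi(x,q)\ge 2^{-n}\}$ is finite; via $\exists^3$ its elements are uniformly computable, and applying $\QFAC^{0,1}$ yields a sequence $\langle x_k\rangle_{k\in\N}$ enumerating the union $\bigcup_{q,n}\{x:\phi(x,q)\ge 2^{-n}\}$. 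By continuity of each $\phi(x,\cdot)$ and density of $\Q$, every $y\in\R$ lies in some $I_{x_k}^\Psi$, delivering $\LIN$. For the second implication, $\QFAC^{0,1}$ follows from $\LIL$ via \cite{dagsamV}*{Theorem~5.3} exactly as in the reverse direction of Theorem~\ref{kokken}; for effective paracompactness, apply $\LIL$ to $\psi:\R\di\R^2$ to extract a countable sub-cover $\{J^\psi_{f(n)}\}_n$, shrink it on each compact $[-k,k]$ via Heine-Borel (available from $\LIL + \WKL$, since a countable cover of the compact set $[-k,k]$ has a finite sub-cover), and glue across $k$ using $\IND$ to obtain a locally finite refinement; the effective choice is $\xi(x) := f(n_x)$ with $n_x$ the least $n$ such that $x\in J^\psi_{f(n)}$, decidable via $\exists^3$.

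For the third implication, $\QFAC^{0,1}$ is preserved. From a locally finite effective refinement $\{I_\alpha^\phi\}_{\alpha\in\R}$ of $\{J_x^\psi\}_x$, first extract a countable sub-cover by the enumeration-at-rationals argument used in the first implication, then build a countable locally finite partition of unity $\{\phi_n\}_n$ dominated by this sub-cover via the classical bump-function-plus-normalization construction, and finally extend to $\Phi:\R^2\di\R$ by zero-padding over $\R$, which is well-defined using $\exists^2$ to decide equality between indices. The principal obstacle is the shrinking-to-locally-finite step in the second implication, a higher-order formalization of (a corollary of) Stone's theorem, for which one must carefully verify that the requisite compactness of $[-k,k]$-covers is actually derivable from $\LIL$ and the base-theory $\WKL$-content in the $(\exists^2)$-case; a secondary subtlety is justifying in the first implication that the enumeration of essential indices proceeds using only $\QFAC^{0,1}$ and $\exists^3$, avoiding any stronger choice.
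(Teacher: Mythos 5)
Your proposal is correct in substance and its overall architecture (case split on $(\exists^2)\vee\neg(\exists^2)$, reduction to countable sub-covers, \cite{simpson2}*{II.7.1--II.7.2} for building partitions of unity and locally finite refinements, and \cite{dagsamV}*{Theorem 5.3} for extracting $\QFAC^{0,1}$ from $\LIL$) matches the paper's, but the one implication the paper actually writes out in detail --- $[\PUNI(\R)+\QFAC^{0,1}]\di\LIL$ --- is handled by a genuinely different argument. The paper ignores the function values of the partition of unity entirely: it extracts from the locally finite family of supports a canonical cover $\Psi:\R\di\R^{+}$ via $\QFAC^{1,0}$ and $\exists^{3}$, and then reruns the greedy-sequence argument of Theorem~\ref{paramaeremki} (with the divergence condition \eqref{tuigs1337} in place of \eqref{tuigs}), so that local finiteness forces the sequence $\zeta(n)$ to exhaust $\R$ and yield a countable sub-cover. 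You instead exploit the partition of unity directly: local finiteness makes each $S_{q,n}=\{x:\phi(x,q)\ge 2^{-n}\}$ finite, you enumerate $\bigcup_{q,n}S_{q,n}$, and continuity in the second argument plus $\sum_x\phi(x,y)=1$ shows these indices already cover; this gives $\LIN$, and Theorem~\ref{kokken} upgrades it to $\LIL$. Your route is arguably more transparent and uses the analytic structure of $\PUNI$ rather than only the local finiteness of the supports; the paper's route has the advantage of recycling machinery already set up for paracompactness and avoiding the enumeration step. Two small points to tighten: (i) ``via $\exists^{3}$ its elements are uniformly computable'' needs a short justification --- first determine $|S_{q,n}|$ by deciding $(\exists x_{1}<\dots<x_{k})(\bigwedge_{i}\phi(x_{i},q)\ge 2^{-n})$ for increasing $k$, then compute the $i$-th order statistic by deciding, for rationals $r$, whether $i$ elements lie below $r$; and (ii) in the second implication, $\xi(x):=f(n_{x})$ with $n_{x}$ the \emph{least} $n$ such that $x\in J^{\psi}_{f(n)}$ need not satisfy $I^{\phi}_{x}\subseteq J^{\psi}_{\xi(x)}$ (that interval may contain $x$ without containing the refining piece); one should instead let $\xi$ return the index $f(n)$ tagged to the refining piece by the shrinking construction itself, exactly as in the ``effective'' refinements of \cite{simpson2, mummymf} discussed at the end of Section~\ref{diemensie2}.
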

\begin{proof}
We first prove the first equivalence.
In case of $\neg(\exists^{2})$, the equivalence is easy: all $\R\di \R$-functions are continuous and $\PUNI(\R)$ is provable as in the proof of \cite{simpson2}*{II.7.2}, while $\LIL$ follows by taking the countable sub-cover given by the rationals. 
In case of $(\exists^{2})$, the reverse implication is also straightforward: the countable sub-cover provided by $\LIL$ is readily refined, and the existence of a partition 
of unity for a countable cover follows from \cite{simpson2}*{II.7.1}. 

\smallskip

Finally, for the forward direction assuming $(\exists^{2})$, let $\psi:\R\di \R^{2}$ be as in $\LIL$ and obtain $\phi:\R^{2}\di \R^{2}$ as in $\PUNI(\R)$, i.e.\ for $V_{x}:=\textsf{support}(\phi(x, \cdot))$, 
the open cover $\cup_{x\in \R }V_{x}$ of $\R$ is locally finite and satisfies $V_{x}\subset I_{x}^{\psi}$.  Now consider:
\be\label{centrifuge25}\textstyle
(\forall x\in \R)(\exists n\in \N)\underline{(\exists y\in \R)\big((x-\frac{1}{2^{n}}, x+\frac{1}{2^{n}})\subseteq U_{y}\big)}, 
\ee
Applying $\QFAC^{1,0}$ to \eqref{centrifuge25}, since $\exists^{3}$ is given, we obtain $\Psi:\R\di \R^{+}$ such that $\cup_{x\in \R}I_{x}^{\Psi}$ covers $\R$. 
Now repeat the proof of Theorem \ref{paramaeremki} for $\Psi$ in place of $\phi$ and $\R$ instead of $I$.
Then instead of \eqref{tuigs}, we make use of the following:
\be\label{tuigs1337}
(\forall x\in \R )(\exists n\in \N)(\zeta(n)\geq _{\R}|x|).
\ee
Note that in this `repeated proof', we do not need the choice function $\Xi$ (from the proof of Theorem \ref{rathergen}), as $I_{x}^{\Psi}$ covers $x$ for any $x\in I$.  
Applying $\QFAC^{1,0}$ to \eqref{tuigs1337}, we obtain $\Phi^{0\di 1}$ such that $\cup_{n\in \N}I_{\Phi(n)}^{\Psi}$ is a countable sub-cover of the canonical cover generated by $\Psi$.
This countable sub-cover of $\cup_{x\in I}I_{x}^{\Psi}$ now yields a countable sub-cover of $\cup_{x\in I}I_{x}^{\psi}$ using $\IND$ and $\QFAC^{0,1}$ in the same way as for Theorem \ref{kokken}. 
\end{proof}

\subsection{A better base theory}\label{BBB}
The results in the previous section are not completely satisfactory: while the Axiom of Choice is avoided (as much as possible), the use of $(\kappa_{0}^{3})$ amounts to little more than a trick.  
In this section, we develop a better approach based on the \emph{neighbourhood function principle} $\NFP$ from \cite{troeleke1}, which is used in \cite{dagsamIII}*{\S3} to derive e.g.\ $\HBU$.   
As will become clear, the $\NFP$ principle gives rise to an elegant base theory for the results in the previous section.

\smallskip

We now introduce a fragment of $\NFP$ that is a generalisation of $\QFAC^{1,0}$ to the following formula class.   We always assume that $Y$ has type $(1\times 0)\di 0$.
\bdefi[$C$-formula]
A \emph{basic} $C$-formula has the form $(\exists f\in 2^{\N})(Y(f, n^{0})=0)$.  
A (general) $C$-formula is obtained from basic $C$-formulas via $\wedge, \vee, \di,$ and $ \neg$.
\edefi
Note that $C$-formulas can have parameters besides the number variable, but quantifiers are restricted to $C$ and do not alternate inside a basic $C$-formula.  
The following axiom was first studied in \cite{dagsamVI} as an extension of the results in \cite{samph}.
\bdefi[$C$-$\NFP_{0}$]
For any $C$-formula $A(\sigma^{0^{*}})$, we have
\[
(\forall f^{1})(\exists n^{0})A(\overline{f}n)\di (\exists \Phi^{2})(\forall f^{1})A(\overline{f}\Phi(f)).
\]
\edefi
By the below results, $C$-$\NFP_{0}$ yields a good base theory for the RM of topology.  
There are however other (more conceptual) reasons for adopting this axiom. For instance, we prove $[\HBU+C$-$\NFP_{0}]\asa C\text{-}\WKL$ over $\RCAo+\IND$ in \cite{dagsamVI}*{\S4}, where $C$-$\WKL$ 
is $\WKL$ with tree-elementhood `$\sigma\in T$' given by a $C$-formula.  Kohlenbach has studied similar generalisations of $\WKL$ in \cite{kohlenbach4}.  

\smallskip

We now have the following corollary to Theorem \ref{fugu}.  
\begin{cor}\label{fugucor}
The system $\RCAo+\textsf{\textup{IND}}+C\text{-}\NFP_{0}$ proves $\HBU\asa \HBT$
\end{cor}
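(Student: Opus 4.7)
The plan is to follow the skeleton of Theorem \ref{fugu}, with the combination $(\kappa_{0}^{3}) + \QFAC^{1,0}$ replaced by a single application of $C\text{-}\NFP_{0}$. The reverse direction $\HBT\di \HBU$ is immediate since any $\Psi:I\di\R^{+}$ is a special case of $\psi:I\di\R$. For the forward direction I would split on $(\exists^{2})\vee\neg(\exists^{2})$. In the case $\neg(\exists^{2})$, I would repeat the Theorem \ref{fugu} argument verbatim: $\HBU$ implies $\WKL$, and the continuity of every $\R\di \R$-function then yields $\HBT$ as in Corollary \ref{dorkeeee}.

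The interesting case is $(\exists^{2})$. Fix $\psi:I\di\R$ such that $\cup_{x\in I}I_{x}^{\psi}$ covers $I$. The key step is to recognise that the underlined formula in \eqref{centrifuge}, namely $(\exists y\in I)((x-\tfrac{1}{2^{n}}, x+\tfrac{1}{2^{n}})\subseteq I_{y}^{\psi})$, is (after modest massaging) a basic $C$-formula in a parameter $\sigma=\bar fm$, where $f$ is a fast-converging Cauchy representation of $x$. One uses $\exists^{2}$ to replace the quantifier $y\in I$ by an equivalent $g\in 2^{\N}$ via the binary expansion $y=\r(g)$, and to render the inclusion $(r_{\sigma}-\tfrac{1}{2^{|\sigma|}}-\tfrac{1}{2^{n}}, r_{\sigma}+\tfrac{1}{2^{|\sigma|}}+\tfrac{1}{2^{n}})\subseteq I_{\r(g)}^{\psi}$ as the (decidable, given $\exists^{2}$) output of a type-two functional in $g$. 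This inclusion is witnessed by $\sigma$ and forces the original inclusion for any Cauchy extension of $\sigma$. Call the resulting $C$-formula $A(\sigma)$; the covering hypothesis then delivers $(\forall f^{1})(\exists m)A(\bar fm)$.

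Applying $C\text{-}\NFP_{0}$ yields $\Phi^{2}$ with $(\forall f^{1})A(\bar f\Phi(f))$. Reading off the witnessed $n$ from $\bar f\Phi(f)$ and using $\exists^{2}$ to canonicalise the Cauchy representation of $x$, define $\Psi(x):=\tfrac{1}{2^{n}}$. Then $\Psi:I\di\R^{+}$ is extensional and $\cup_{x\in I}I_{x}^{\Psi}$ is a canonical cover of $I$ satisfying $(\forall x\in I)(\exists y\in I)(I_{x}^{\Psi}\subseteq I_{y}^{\psi})$. Applying $\HBU$ to $\Psi$ delivers $y_{1},\dots, y_{k}\in I$ whose $\Psi$-intervals cover $I$; the $\IND$-induction on $|w|$ from Theorem \ref{fugu}, instantiated at $w=\langle y_{1},\dots,y_{k}\rangle$, lifts this back to a finite sub-cover of $\cup_{x\in I}I_{x}^{\psi}$, yielding $\HBT$. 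Excluded middle combines the two cases.

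The main obstacle I anticipate is the bookkeeping for the $C$-formula $A(\sigma)$: one must check that the conversion of the real-number inclusion into an open condition on Cauchy initial segments (with $\exists^{2}$ supplying the requisite comparisons) genuinely fits the basic $C$-formula format of the definition, and that $\Psi$ comes out extensional in $x$. Both points are handled uniformly by $\exists^{2}$, but they call for a careful, if routine, verification. Once that is in place, the $C\text{-}\NFP_{0}$ step and the subsequent $\IND$-lifting are direct transplants from the proof of Theorem \ref{fugu}.
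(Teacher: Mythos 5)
Your proposal is correct and takes essentially the same route as the paper: the paper likewise rewrites \eqref{centrifuge} so that the inner formula depends only on an initial segment (the $n$-th rational approximation) of the Cauchy representation of $x$, uses $\exists^{2}$ to turn the existential over $y\in I$ into a $C$-formula via binary representations, applies $C\text{-}\NFP_{0}$ in place of $(\kappa_{0}^{3})+\QFAC^{1,0}$, and then declares the rest of the proof of Theorem \ref{fugu} identical. Your extra care about the extensionality of $\Psi$ and the exact widening of the interval is just the ``minor abuse of notation'' the paper glosses over.
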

\begin{proof}
We show that $C\text{-}\NFP_{0}$ applies to \eqref{centrifuge} and the rest of the proof is identical.  
Now, it is instructive to (equivalently) write \eqref{centrifuge} as follows:
\be\label{cetri}\textstyle
(\forall x\in I)(\exists n\in \N)\underline{(\exists y\in I)(([x](n+1)-\frac{1}{2^{n+1}}, [x](n+1)+\frac{1}{2^{n+1}})\subseteq I_{y}^{\psi})}, 
\ee
where $[x](n)$ is the $n$-th approximation of $x$.  One can then write the underlined formula in \eqref{cetri} as $A(\overline{x}n)$ with only minor abuse of notation.
Note that we need $\exists^{2}$ to convert $A(\overline{x}n)$ into a $C$-formula (using a binary representation for $y$).
\end{proof}
We now have the following corollary to Theorem \ref{kokken}.  
\begin{cor}\label{kokkencor}
The system $\RCA_{0}^{\omega}+\textsf{\textup{IND}}+C\text{-}\NFP_{0}$ proves $[\LIN+\QFAC^{0,1}]\asa \LIL$.
\end{cor}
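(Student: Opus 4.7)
The plan is to transport the proof of Theorem~\ref{kokken} to this new base theory, replacing the single invocation of $(\kappa_{0}^{3})$-plus-$\QFAC^{1,0}$ by an appropriate instance of $C\text{-}\NFP_{0}$, exactly in the spirit of Corollary~\ref{fugucor}. The reverse implication $\LIL\to[\LIN+\QFAC^{0,1}]$ is inherited verbatim from the reverse direction of Theorem~\ref{kokken}: the implication $\LIL\to\LIN$ is immediate, while $\LIL\to\QFAC^{0,1}$ is \cite{dagsamV}*{Theorem~5.3}, noting that its use of $(\kappa_{0}^{3})$ only serves to generalise the admissible class of covers, which is likewise achieved here via $\exists^{2}$ together with $C\text{-}\NFP_{0}$, as in Corollary~\ref{fugucor}.

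For the forward direction, apply the law of excluded middle on $(\exists^{2})$. In case $\neg(\exists^{2})$, every $\R\to\R$-function is continuous by \cite{kohlenbach2}*{Prop.\ 3.7}, so the intervals generated by rationals form a countable sub-cover and $\LIL$ is immediate. In case $(\exists^{2})$, fix $\psi:\R\to\R^{2}$ as in $\LIL$ and replace \eqref{centrifuge22} by its approximation-based reformulation
\be\label{aaa}\textstyle
(\forall x\in \R)(\exists n\in \N)\underline{(\exists y\in \R)\big(([x](n+1)-\tfrac{1}{2^{n+1}}, [x](n+1)+\tfrac{1}{2^{n+1}})\subseteq J_{y}^{\psi}\big)},
\ee
so that the underlined formula is of the shape $A(\overline{x}n)$ (with minor abuse of notation). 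Using $\exists^{2}$ to convert $(\exists y\in\R)$ into an existential over $2^{\N}$ via binary representations, and to decide inclusion of an interval with rational endpoints into an interval determined by $\psi$, the matrix $A$ can be massaged into a basic $C$-formula.

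Applying $C\text{-}\NFP_{0}$ to \eqref{aaa} yields $\Phi^{2}$ such that $A(\overline{x}\Phi(x))$ for all $x\in \R$; setting $\Psi(x):=\tfrac{1}{2^{\Phi(x)+1}}$ produces $\Psi:\R\to\R^{+}$ whose canonical cover refines the given one, namely $(\forall x\in\R)(\exists y\in\R)(I_{x}^{\Psi}\subseteq J_{y}^{\psi})$. Applying $\LIN$ gives $\Phi_{0}^{0\to 1}$ with $\R\subseteq\cup_{n}I_{\Phi_{0}(n)}^{\Psi}$; as in the proof of Theorem~\ref{kokken}, the analogue of \eqref{conplete2} follows by $\IND$ on $n$, and $\QFAC^{0,1}$ then collects the witnesses into a single sequence of reals producing a countable $J^{\psi}$-sub-cover of $\R$, which is $\LIL$. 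The only genuinely new step is the verification that the matrix of \eqref{aaa} is a $C$-formula under $\exists^{2}$; this is the same subtlety that was resolved in the proof of Corollary~\ref{fugucor}, and everything else is a routine translation of the proof of Theorem~\ref{kokken}.
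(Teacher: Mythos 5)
Your proposal is correct and follows essentially the same route as the paper: the paper's own proof simply replaces \eqref{centrifuge22} by the approximation-based formula so that $C\text{-}\NFP_{0}$ applies, with the remainder carried over from Theorem~\ref{kokken} exactly as you describe. Your additional remarks on the reverse direction and on converting the matrix into a $C$-formula via $\exists^{2}$ are details the paper leaves implicit, but they match its intent.
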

\begin{proof}
Similar to the proof of Corollary \ref{fugucor}, replace \eqref{centrifuge22} by
\be\label{centrifuge22cor}\textstyle
(\forall x\in \R)(\exists n\in \N){(\exists y\in \R)\big(([x](n+1)-\frac{1}{2^{n+1}}, [x](n+1)+\frac{1}{2^{n+1}})\subseteq J_{y}^{\psi}\big)},
\ee
and note that $C\text{-}\NFP_{0}$ applies.
\end{proof}
In light of the previous corollaries, it is clear one can similarly replace $(\kappa_{0}^{3})$ by $C\text{-}\NFP_{0}$ in 
Theorems \ref{kokken3} and \ref{kokken327} (and corollaries) by considering modifications of \eqref{centrifuge2555} and \eqref{centrifuge25} similar to \eqref{cetri} and \eqref{centrifuge22cor}.  

\smallskip

By the above, $C\text{-}\NFP_{0}$ seems to be an acceptable/usefull generalisation of $\QFAC^{1,0}$ and should be adopted as part of the base theory as well.  
We invite the reader to ponder a similar generalisation of $\Delta_{1}^{0}$-comprehension.  The answer is given in \cite{samph}.

\section{Conclusion}\label{kurtzenhier}
We have studied the higher-order RM of topology, the notions of \emph{dimension} and \emph{paracompactness} in particular. 
Basic theorems regarding the latter turn out to be equivalent to the \emph{Heine-Borel theorem} for uncountable covers, i.e.\ the former are extremely hard to prove (in terms of comprehension axioms).    
A number of nice splittings was obtained, and we have shown that these results do not depend on the exact definition of cover, even in the absence of the axiom of choice.  
In this section, we discuss the foundational implications of our results, esp.\ as they pertain to the \emph{G\"odel hierarchy}.
Now, the latter is a collection of logical systems ordered via consistency strength.  This hierarchy is claimed to capture most systems that are natural or have foundational import, as follows. 
\begin{quote}
\emph{It is striking that a great many foundational theories are linearly ordered by $<$. Of course it is possible to construct pairs of artificial theories which are incomparable under $<$. However, this is not the case for the ``natural'' or non-artificial theories which are usually regarded as significant in the foundations of mathematics.} (\cite{sigohi})
\end{quote}
Burgess and Koellner corroborate this claim in \cite{dontfixwhatistoobroken}*{\S1.5} and \cite{peterpeter}*{\S1.1}.
The G\"odel hierarchy is a central object of study in mathematical logic, as e.g.\ argued by Simpson in \cite{sigohi}*{p.\ 112} or Burgess in \cite{dontfixwhatistoobroken}*{p.\ 40}.  
Precursors to the G\"odel hierarchy may be found in the work of Wang (\cite{wangjoke}) and Bernays (see \cite{theotherguy,puben}).
Friedman (\cite{friedber}) studies the linear nature of the G\"odel hierarchy in detail.  
Moreover, the G\"odel hierarchy exhibits some remarkable \emph{robustness}: we can perform the following modifications and the hierarchy remains largely unchanged:
\begin{enumerate}
 \renewcommand{\theenumi}{\roman{enumi}}
\item Instead of the consistency strength ordering, we can order via inclusion: Simpson claims that inclusion and consistency strength yield the same\footnote{Simpson mentions in \cite{sigohi} the caveat that e.g.\ $\PRA$ and $\WKL_{0}$ have the same first-order strength, but the latter is strictly stronger than the former.} G\"odel hierarchy as depicted in \cite{sigohi}*{Table 1}.  Some exceptional (semi-natural) statements\footnote{There are some examples (predating $\HBU$ and \cite{dagsamIII}) that fall outside of the G\"odel hierarchy \emph{based on inclusion}, like \emph{special cases} of Ramsey's theorem and the axiom of determinacy from set theory (\cites{dsliceke, shoma}).  These are far less natural than e.g.\ Heine-Borel compactness, in our opinion.} do fall outside of the inclusion-based G\"odel hierarchy.\label{kut} 
\item We can replace the systems with their higher-order (eponymous but for the `$\omega$') counterparts.  The higher-order systems are generally conservative over their second-order counterpart for (large parts of) $\L_{2}$.  Hunter's dissertation contains a number of such general results (\cite{hunterphd}*{Ch.\ 2}).\label{lul}
\end{enumerate}
Now, \emph{if} one accepts the modifications (inclusion ordering and higher types) described in the previous two items, \emph{then} an obvious question is where basic topological theorems pertaining to e.g.\ dimension and paracompactness fit within the G\"odel hierarchy.  As depicted in Figure \ref{xxy}, the aforementioned theorems yield a branch that is \emph{completely} independent of the medium range of the G\"odel hierarchy (with the latter based on inclusion).
The same `independence' holds for basic properties of the gauge integral, including many covering lemmas (see \cite{dagsamIII}), as well as for so-called uniform theorems (see \cite{dagsamV}) in which the objects claimed to exist depend on few of the parameters of the theorem.
Some remarks on the technical details concerning Figure \ref{xxy} are as follows. 
\begin{rem}\rm\label{knellen}
First of all, we use a \emph{non-essential} modification of the G\"odel hierarchy, namely involving systems of higher-order arithmetic, like e.g.\ $\ACA_{0}^{\omega}$ instead of $\ACA_{0}$; these systems are (at least) $\Pi_{2}^{1}$-conservative over the associated second-order system (see e.g.\ \cite{yamayamaharehare}*{Theorem 2.2}).  

\smallskip

Secondly, $\Z_{2}^{\Omega}$ is placed \emph{between} the medium and strong range, as the combination of the recursor $\textsf{R}_{2}$ from G\"odel's $T$ and $\exists^{3}$ yields a system stronger than $\Z_{2}^{\Omega}$.  The system $\SIXK$ does not change in the same way.     

\smallskip

Thirdly, while $\HBT$ clearly implies $\WKL$, the paracompactness of the unit interval does not (by the $\ECF$-translation); this is symbolised by the dashed line.  

\smallskip

Fourth, while $\HBT$ and similar statements are \emph{hard} to prove (in terms of comprehension axioms), these theorems (must) have weak first-order strength in light of their provability in intuitionistic topology (see e.g.\ \cite{waaldijkphd, troelstraphd}). 
\end{rem}
The previous remark also establishes that the systems with superscript `$\omega$' deserve to be called the \emph{higher-order counterparts} of the corresponding second-order systems, while $\Z_{2}^{\Omega}$ does not fall into the same category.  
\begin{figure}[H]
\[
\begin{array}{lll}
&\textup{\textbf{strong}} \hspace{1.5cm}& 
\left\{\begin{array}{l}
\textup{large cardinals}\\
\ZFC \\
\textsf{\textup{ZC}} \\
\textup{simple type theory}
\end{array}\right.
\\
&&\\
&&  ~\quad{ {\Z_{2}^{\Omega}}}+\QFAC^{0,1}\\
&&\\
  &&~\quad{ {\Z_{2}^{\Omega}}} \\
&&\\
{ {\left\{\begin{array}{l}
\textup{covering lemmas like $\LIL$}\\
\textup{basic theorems about para-}\\
\textup{compactness and dimension}\\
\textup{for the real numbers $\R$}
\end{array}\right\}}}
&\textup{\textbf{medium}} & 
\left\{\begin{array}{l}
 {\Z}_{2}^{\omega}+ \QFAC^{0,1}\\
\vdots\\
\textup{$\FIVE^{ {\omega}}$ }\\
\textup{$\ATR_{0}^{ {\omega}}$}  \\
\textup{$\ACA_{0}^{ {\omega}}$} \\
\end{array}\right.
\\
&
\\
{ {\left\{\begin{array}{l}
\textup{covering lemmas like $\HBT$}\\
\textup{basic theorems about para-}\\
\textup{compactness and dimension}\\
\textup{for the unit interval $[0,1]$}
\end{array}\right\}}}
&\begin{array}{c}\\\textup{\textbf{weak}}\\ \end{array}& 
\left\{\begin{array}{l}
\WKL_{0}^{ {\omega}} \\
\textup{$\RCA_{0}^{ {\omega}}$} \\
\textup{$\textsf{PRA}$} \\
\textup{$\textsf{EFA}$ } \\
\textup{bounded arithmetic} \\
\end{array}\right.
\\
\end{array}
\]
\caption{The G\"odel hierarchy with a side-branch for the medium range}\label{xxy}
\begin{picture}(250,0)
\put(195,220){ {\vector(-3,-1){140}}}
\put(190,195){ {\vector(-3,-1){90}}}
\put(160,175){{\line(-5,3){20}}}
\put(160,117){ {\vector(-3,-2){53}}}
\multiput(100,70)(5,0){14}{\line(1,0){3}}
\put(167,70){ {\vector(1,0){8}}}
\put(125,100){ {\vector(3,2){50}}}
\put(162,105){{\line(-5,3){20}}}
\put(-35,87){ {\vector(0,1){27}}}
\put(-33,100){ \textup{(+$\QFAC^{0,1}$)}}
\put(36,115){ {\vector(0,-1){25}}}
\put(38,100){ \textup{(+$\WKL$)}}
\put(92, 92){\setlength{\unitlength}{1cm}\qbezier(0,0)(1.5,0.5)(3.6,3.6)}
\put(89,92){ {\vector(-3,-1){3}}}
\end{picture}
\end{figure}
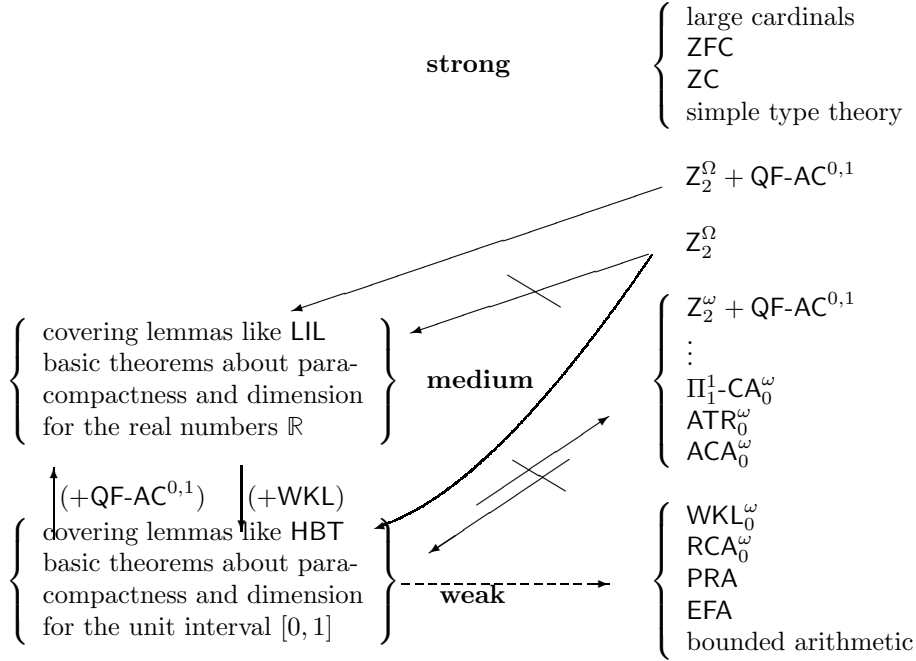~\\
In light of the equivalences involving the gauge integral, uniform theorems, and the Cousin lemma (and hence $\HBT$) from \cite{dagsamIII, dagsamV}, one is tempted to see a serious challenge to the `Big Five' classification from RM,  the linear nature of the G\"odel hierarchy, as well as Feferman's claim that the mathematics necessary for the development of physics can be formalised in relatively weak logical systems (see \cite{dagsamIII}*{p.~24}).
While such an `anti-establishment' view is indeed tempting, a more enlightened interpretation of the aforementioned equivalences can be found in \cite{samph}.
In a nuthshell, the second-order part of the G\"odel hierarchy (including equivalences) is merely the result of applying the $\ECF$-translation to a carefully formulated higher-order hierarchy; this translation maps equivalences to equivalences.
 
 \smallskip

Regarding future work, the following two topics come to mind.  Firstly, there are a number of notions weaker than paracompactness, and it is an interesting question if there are \emph{natural} such notions that yield equivalences with $\HBT$ or weaker theorems.  
Secondly, in light of Remark \ref{diemummy}, it seems interesting to study metrisation theorems in higher-order RM.  We expect that such theorems go far beyond $\SIX$, which features in the second-order RM of topology.  
%
%

\begin{ack}\rm
Our research was supported by the John Templeton Foundation, the Alexander von Humboldt Foundation, LMU Munich (via the Excellence Initiative and the Center for Advanced Studies of LMU), and the University of Oslo.
We express our gratitude towards these institutions. 
We thank Dag Normann for his valuable advice.  
Opinions expressed in this paper do not reflect those of the John Templeton Foundation.    
\end{ack}

\begin{bibdiv}
\begin{biblist}
\bibselect{allkeida}
\end{biblist}
\end{bibdiv}

\bye